\title{Equivalences between blocks of $p$-local Mackey algebras.}
\author{Baptiste Rognerud}
\begin{document}
\maketitle
\theoremstyle{plain}
\newtheorem{theo}{Theorem}[section]
\newtheorem{theox}{Theorem}[section]
\renewcommand{\thetheox}{\Alph{theox}}
\newtheorem{prop}[theo]{Proposition}
\newtheorem{lemma}[theo]{Lemma}
\newtheorem{coro}[theo]{Corollary}
\newtheorem{question}[theo]{Question}
\newtheorem{notations}[theo]{Notations}
\theoremstyle{definition}
\newtheorem{de}[theo]{Definition}
\newtheorem{dex}[theox]{Definition}
\theoremstyle{remark}
\newtheorem{ex}[theo]{Example}
\newtheorem{re}[theo]{Remark}
\newtheorem{res}[theo]{Remarks}
\renewcommand{\labelitemi}{$\bullet$}
\newcommand{\comu}{co\mu}
\newcommand{\mo}{\ensuremath{\hbox{mod}}}
\newcommand{\Mo}{\ensuremath{\hbox{Mod}}}
\newcommand{\decale}[1]{\raisebox{-2ex}{$#1$}}
\newcommand{\decaleb}[2]{\raisebox{#1}{$#2$}}
\begin{abstract}Let $G$ be a finite group and $(K,\mathcal{O},k)$ be a $p$-modular system. Let $R=\mathcal{O}$ or $k$. There is a bijection between the blocks of the group algebra and the blocks of the so-called $p$-local Mackey algebra $\mu_{R}^{1}(G)$. Let $b$ be a block of $RG$ with abelian defect group $D$. Let $b'$ be its Brauer correspondant in $N_{G}(D)$. It is conjectured by Brou\'e that the blocks $RGb$ and $RN_{G}(D)b'$ are derived equivalent. Here we look at equivalences between the corresponding blocks of $p$-local Mackey algebras. We prove that an analogue of the Brou\'e's conjecture is true for the $p$-local Mackey algebras in the following cases: for the principal blocks of $p$-nilpotent groups and for blocks with defect $1$. We also point out the probable importance of \emph{splendid} equivalences for the Mackey algebras. 
\end{abstract}
\par\noindent
{\it{\footnotesize   Key words: Modular representation. Finite group. Mackey functor. Block theory.}}
\par\noindent
{\it {\footnotesize A.M.S. subject classification: 20C05, 18E30,16G10,16W99.}}

\section{Introduction and preliminaries}
\subsection{Introduction}
Let $G$ be a finite group and $(K,\mathcal{O},k)$ be a $p$-modular system. Let $R=\mathcal{O}$ or $k$. In their paper, Th\'evenaz and Webb proved that there is a bijection $b\mapsto b^{\mu}$ between the blocks of $RG$ and the primitive central idempotents of $\mu_{R}^{1}(G)$, called the blocks of $\mu_{R}^{1}(G)$, where $\mu_{R}^{1}(G)$ is the so called $p$-local Mackey algebra.  
\newline This bijection preserves the defect groups (which are the same as the defect groups of the corresponding block of $RG$, see Section $5.3$ of \cite{bouc_blocks}). So using the Brauer correspondence, we have the following diagram: Let $b$ be a block of $RG$ with defect group $D$ and $b'$ be the Brauer correspondent of $b$ in $RN_{G}(D)$. 
\begin{equation*}
\xymatrix{
b\in Z(RG)\ar[r]\ar[d] & b^{\mu}\in Z(\mu_{R}^{1}(G)\ar[d]) \\
b'\in Z(RN_{G}(D))\ar[r] & b'^{\mu}\in Z(\mu_{R}^{1}(N_{G}(D))).
}
\end{equation*}
If $D$ is abelian, it is conjectured by Brou\'e that the block algebras $RGb$ and $RN_{G}(D)b'$ are derived equivalent. In \cite{equiv_coho}, the Author proved that if two blocks of group algebras are \emph{splendidly} derived equivalent, then the corresponding blocks of the so-called \emph{cohomological} Mackey algebras are derived equivalent. The cohomological Mackey algebra is a quotient of the $p$-local Mackey algebra. So, it is a very natural question to ask if the same can happen for the corresponding $p$-local Mackey algebras which contain much more informations than their cohomological quotient (see Proposition \ref{decomp} e.g.). 
\begin{question}[Bouc]\label{q1}
Let $G$ be a finite group and $b$ be a block of $\mathcal{O}G$ with abelian defect group $D$. Let $b'$ be its Brauer correspondent in $\mathcal{O}N_{G}(D)$. Is there a derived equivalence $D^{b}(\mu_{\mathcal{O}}^{1}(G)b^{\mu})\cong D^{b}(\mu_{\mathcal{O}}^{1}(N_{G}(D))b'^{\mu})$ ?
\end{question}
 Unfortunately, the tools which was developed for the cohomological Mackey algebras cannot be used here. Moreover Question \ref{q1} is much harder than its analogue for the cohomological Mackey algebras and is connected to deep questions of representation of finite groups such as the knowledge of the indecomposable $p$-permutation modules (see Section $6$ and $7$ of \cite{bouc_resolution}). Here, we will not answer this question in general, but we consider it in the following two cases: first for the Mackey algebra of the principal block of $p$-nilpotent groups, and then for groups with Sylow $p$-subgroups of order $p$. 
\newline The main result of this paper is the following theorem:
\begin{theo}
The answer to Question \ref{q1} is affirmative in the following cases:
\begin{itemize}
\item For principal blocks of $p$-nilpotent groups. Here we do not assume the Sylow $p$-subgroups to be abelian. Moreover it is a Morita equivalence.
\item When the $p$-local Mackey algebra is symmetric. That is for the blocks with defect $1$. 
\end{itemize}
\end{theo}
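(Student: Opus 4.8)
For the first bullet (principal blocks of $p$-nilpotent groups), the plan is to exploit the structure of a $p$-nilpotent group $G = K \rtimes P$ with $P$ a Sylow $p$-subgroup and $K = O_{p'}(G)$. The principal block $b_0$ of $RG$ is Morita equivalent to $RP$ by a classical result, and the Brauer correspondent in $N_G(P)$ is the principal block of $N_G(P) = C_K(P) \rtimes P$, again Morita equivalent to $RP$. So the group-algebra side is already settled; what must be checked is that this transfers to the Mackey side, i.e. that $\mu_R^1(G)b_0^\mu$ is Morita equivalent to $\mu_R^1(P)$ (which has only one block). The natural approach is to identify an explicit progenerator: the $p$-local Mackey algebra of $G$ acts on the fixed-point/Brauer-quotient data of $p$-permutation $RG$-modules, and for a $p$-nilpotent group the indecomposable $p$-permutation modules in the principal block are well understood (they are the Green correspondents coming from $P$). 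I would try to show that restriction along $P \hookrightarrow G$, together with the idempotent $b_0^\mu$, induces an equivalence, or equivalently construct a bimodule ${}_{\mu_R^1(G)b_0^\mu}M_{\mu_R^1(P)}$ from the relevant $p$-permutation bimodule and verify the Morita criteria by computing endomorphism algebras. The main obstacle here is bookkeeping: controlling how the Mackey-algebra multiplication (induction, restriction, conjugation, and the Brauer/fixed-point maps encoded in $\mu^1$) interacts with the $K$-part, and checking that passing to the principal-block idempotent kills exactly the contributions of nontrivial $p'$-subgroups so that only the $P$-data survives.

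For the second bullet (defect $1$), the plan is different and rests on the symmetry of $\mu_R^1(Gb)$ in this case. When the defect group $D$ has order $p$, the block $b$ has cyclic defect group, so $RGb$ is a Brauer tree algebra and $RN_G(D)b'$ is the corresponding Brauer tree algebra for a star with the same multiplicity; by Rickard's theorem these are derived equivalent, in fact splendidly. The idea is to lift this to the Mackey side. One route: use the result alluded to in the introduction (from \cite{equiv_coho} and its analogues) that a splendid equivalence between blocks of group algebras induces an equivalence on cohomological Mackey algebras, and then upgrade from the cohomological quotient to the full $p$-local Mackey algebra. But the excerpt warns that those tools do not apply directly, so the more robust approach is to use that $\mu_R^1(Gb)$ is a symmetric algebra of finite representation type (defect $1$ forces this), classify it up to Morita equivalence — it should again be a Brauer-tree-type algebra, whose tree one reads off from the Brauer tree of $RGb$ together with the combinatorics of the simple Mackey functors of defect $0$ and $1$ — and then match it, via Rickard's classification of derived equivalences of symmetric Nakayama/Brauer tree algebras, with the corresponding algebra for $N_G(D)$. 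Concretely, I would: (1) determine the quiver and relations, or at least the Cartan matrix and the structure of the stable Auslander–Reiten quiver, of $\mu_R^1(G)b^\mu$; (2) do the same for $\mu_R^1(N_G(D))b'^\mu$, where the group is $D \rtimes E$ with $E$ the inertial quotient, so everything is explicit; (3) observe both are symmetric algebras whose Brauer trees are trees (open polygons for the local side), and invoke Rickard to conclude a derived (even tilting/splendid) equivalence.

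The step I expect to be hardest is (1) in the defect-$1$ case: explicitly computing the $p$-local Mackey algebra of a block with defect $p$. Unlike the cohomological Mackey algebra, $\mu^1$ retains the Brauer-quotient information for the subgroup of order $p$, so its indecomposable projectives and simples are governed by the indecomposable $p$-permutation modules of $RGb$ and their Brauer constructions — exactly the "deep questions" the introduction flags. I would handle this by reducing to the structure of $p$-permutation modules over a Brauer tree algebra (these are classified, via the Brauer tree and the exceptional vertex), reading off the relevant fixed-point functors, and thereby pinning down the algebra as a gluing of the group-algebra block with a local piece; symmetry of $\mu_R^1(Gb)$ in defect $1$ is what guarantees the outcome is again a Brauer-tree algebra and hence accessible to Rickard's machinery. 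If a full quiver-and-relations description proves too unwieldy, I would fall back to proving the derived equivalence by exhibiting a one-sided tilting complex built from the natural complexes on the group side (the Rouquier complex for the cyclic block) restricted/transported to the Mackey algebra, and checking its endomorphism ring.
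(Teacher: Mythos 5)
Your strategic frame is right for both cases, but in each you are missing the key existing result that makes the argument actually go through, and the gaps you flag as ``hardest'' are real gaps in your proposal.

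For the $p$-nilpotent case, you correctly identify the candidate equivalence $Res^{G}_{P}$ paired with $b_{0}^{\mu}Ind_{P}^{G}$, but the ``bookkeeping'' obstacle you flag is precisely what needs a clean bypass and your plan does not supply one. The paper's mechanism is: (a) for Mackey functors, $Res^{G}_{P}$ and $b_{0}^{\mu}Ind_{P}^{G}$ are two-sided adjoint; (b) the evaluation at the trivial subgroup of the unit and counit of this adjunction coincide with the unit and counit of the known Morita equivalence $kGb_{0}\hbox{-}Mod\cong kP\hbox{-}Mod$; and (c) a morphism between projective $p$-local Mackey functors is an isomorphism if and only if its evaluation at $1$ is (Lemma~\ref{fitting}, quoted from Bouc's \emph{R\'esolutions de foncteurs de Mackey}). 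Points (a)--(c) together show the unit and counit are isomorphisms on projectives, and one then extends to arbitrary objects by comparing projective resolutions. Step (c) is the crucial missing ingredient in your write-up: without it you really would face the explicit analysis of how the $O_{p'}(G)$-part interacts with the Mackey multiplication, which you correctly anticipate as hard.

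For the defect-$1$ case, the step you single out as hardest --- determining that $\mu_{k}^{1}(b)$ is a Brauer tree algebra and identifying its tree --- is exactly what Th\'evenaz and Webb already proved (Theorem~20.10 of \cite{tw}, generalized from Sylow of order $p$ to defect of order $p$): the block of $\mu_{k}^{1}$ is a Brauer tree algebra whose tree has $2e$ edges when $T_{\rm Mod}$ has $e$, the same exceptional vertex, and each edge not in $T_{\rm Mod}$ is a twig. With this in hand, the derived equivalence over $k$ follows at once from the standard Rickard-type classification (two Brauer tree algebras over the same field with the same exceptional multiplicity are derived equivalent if and only if they have the same number of edges); no quiver-and-relations or Rouquier-complex computation is required, and the combinatorics of $p$-permutation modules you worry about is already packaged in the Th\'evenaz--Webb theorem. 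Finally, your proposal never addresses the $\mathcal{O}$-form, which is what Question~\ref{q1} actually asks for: the paper lifts the result by showing $\mu_{\mathcal{O}}^{1}(b)$ is a Green order (lifting Green's walk around the tree to $\mathcal{O}$) and then applying the K\"onig--Zimmermann criterion for derived equivalences of Green orders, together with a determination of the local orders at the vertices. This lifting step is a genuine additional argument that your plan omits.
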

For a non principal block of a $p$-nilpotent group, or more generally for a nilpotent block it is not expect for the corresponding block of the Mackey algebra to be Morita equivalent to the Mackey algebra of its defect group. 
\newline Let $b$ and $c$ be two blocks of group algebras. Then it is probable that the corresponding blocks of $p$-local Mackey algebras will be equivalent only if there is an equivalence between $b$ and $c$ which respect the $p$-permutation modules (\emph{splendid} equivalences e.g.). We do not have a proof of this fact, but there are some clues in this direction. Example \ref{ex_sl} is exactly an example of blocks which are Morita equivalent but not `splendidly' Morita equivalent. In this case, the corresponding blocks of $p$-local Mackey algebras are not Morita equivalent but derived equivalent. Other example can be found in Chapter $4$ of \cite{these}. Moreover the decomposition matrix of a block of $p$-local Mackey algebras involves $p$-permutation modules and Brauer quotient (see Proposition \ref{decomp}).
\newline In the first part, we recall some basic definitions and results on Mackey functors. Since we will use several points of view on Mackey functors, we give a full proof of the well-known fact that all these points of view are equivalent. Then we recall the bijection between the blocks of the group algebra and the blocks of the $p$-local Mackey algebra. At the end of the second section, we explain how the decomposition matrix of the $p$-local Mackey algebra of a finite group $G$ can be computed from the knowledge of the ordinary characters of some $p$-local subgroups of $G$ and the indecomposable $p$-permutation $kG$-modules.
\newline In the last part, we investigate on some basic properties of equivalences between blocks of $p$-local Mackey algebras. We show that naive thoughts about these questions are seldom true. For example an equivalence between blocks of $p$-local Mackey algebras do not need to induce an equivalence for the cohomological quotient. If two blocks of group algebras are isomorphic (resp. Morita equivalent), then the corresponding blocks of $p$-local Mackey algebras do not need to be isomorphic (resp. Mortia equivalent).
\newline We show that there is a Morita equivalence between the principal block of the $p$-local Mackey algebra of a $p$-nilpotent group $G$ and the Mackey algebra of a Sylow $p$-subgroup of $G$. Moreover, we determine the structure of algebra of the block of the $p$-local Mackey algebra in this case. This can be view as an analogue of Puig's theorem (see \cite{nilpotent_puig} or \cite{nilpotent_revisited}) for the Mackey algebras. 
\newline Then we will answer Question $1.1$ for blocks with defect group of order $p$. The result uses the fact that the Mackey algebras are \emph{symmetric} algebras in this situation. More precisely over the field $k$, they are \emph{Brauer tree} algebras and over the valuation ring $\mathcal{O}$ they are \emph{Green orders}.\newline It should be notice that all the equivalences between blocks of groups algebras which induce equivalences between the corresponding $p$-local Mackey algebras that we were able to construct are in fact \emph{splendid}. 
\paragraph{Notation:} Let $R$ be a commutative ring with unit. We denote by $R$-$Mod$ the category of (all) $R$-modules.
\newline Let $G$ be a finite group. Let $p$ be a prime number. We denote by $(K,\mathcal{O},k)$ a $p$-modular system, i-e $\mathcal{O}$ is a complete discrete valuation ring with maximal ideal $\mathfrak{p}$, such that $\mathcal{O}/\mathfrak{p}=k$ is a field of characteristic $p$ and $Frac(\mathcal{O})=K$ is a field of characteristic zero. If $R=\mathcal{O}$ or $k$, then the direct summands of the permutation $RG$-modules are called $p$-\emph{permutation} modules. We denote by $G$-$set$ the category of finite $G$-sets. If $H$ is a subgroup of $G$ then, we denote by $N_{G}(H)$ its normalizer in $G$. The quotient $N_{G}(H)/H$ will be, sometimes, denoted by $\overline{N}_{G}(H)$. We denote by $\Omega_{G}$ the disjoint union of all transitive $G$-sets. 
\newline Let $V$ be an $RG$-module, for $R=\mathcal{O}$ or $k$. Let $Q$ be a $p$-subgroup of $G$. Since we are dealing with Mackey functor we use the notation $V[Q]$ instead of the usual $V(Q)$ for the Brauer quotient of $V$ (see Section $1$ \cite{broue_scott}).
\newline If $F : \mathcal{A}\to \mathcal{B}$ and $G: \mathcal{B}\to \mathcal{A}$ are two functors, we denote by $F \dashv G$ the fact that $F$ is a left adjoint of $G$.\\  
 N.B. We will denote by the same letter the block idempotents for the ring $\mathcal{O}$ and the field $k$.  
\section{Basic results on Mackey functors.}
Let $G$ be a finite group, and $R$ be a commutative ring. There are several definitions of Mackey functors for $G$ over a ring $R$, the first one was introduced by Green in \cite{green}: 
\begin{de}
A Mackey functor for $G$ over $R$ consists of the following data: 
\begin{itemize}
\item For every subgroup $H$ of $G$, an $R$-module denoted by $M(H)$. 
\item For subgroups $H\subseteq K$ of $G$, a morphism of $R$-modules \newline $t_{H}^{K}:M(H)\rightarrow~M(K)$ called transfer, or induction, and a morphism of $R$-modules $r^{K}_{H}: M(K)\rightarrow M(H)$ called restriction.
\item For every subgroup $H$ of $G$, and each element $x$ of $G$, a morphism of $R$-modules $c_{x,H}: M(H)\rightarrow M(^{x}H)$ called conjugacy map.
\end{itemize}
such that:
\begin{enumerate}
\item \emph{Triviality axiom}: For each subgroup $H$ of $G$, and each element $h\in H$, the morphisms $r^{H}_{H}$, $t^{H}_{H}$ et $c_{h,H}$ are the identity morphism of $M(H)$.                         
\item \emph{Transitivity axiom}: If $H \subseteq K \subseteq L$ are subgroups of $G$, then 
$t^{L}_{K}\circ t^{K}_{H} = t^{L}_{H}$ and $r^{K}_{H}\circ r^{L}_{K} = r^{L}_{H}$. Moreover if $x$ and $y$ are elements of $G$, then $c_{y,^{x}H}\circ c_{x,H}=c_{yx,H}$.
\item \emph{Compatibility axioms:} If $H\subseteq K$ are subgroups of $G$, and if $x$ is an element of $G$, then $c_{x,K}\circ t^{K}_{H} = t^{^{x}K}_{^{x}H}\circ c_{x,H}$ et $c_{x,H}\circ r^{K}_{H} = r^{^{x}K}_{^{x}H}\circ c_{x,K}$.
\item \emph{Mackey axiom:} If $H\subseteq K \supseteq L$ are subgroups of $G$, then 
\begin{equation*}
r^{K}_{H}\circ t^{K}_{L} = \sum_{x\in [H\backslash K / L]} t^{H}_{H\cap ^{x}L}\circ c_{x,H^{x}\cap L}\circ r^{L}_{H^{x} \cap L}.
\end{equation*} 
where $[H\backslash K / L]$ is a set of representatives of the double cosets $H\backslash K /L$. 
\end{enumerate}
\end{de}
\noindent In particular, for each subgroup $H$ of $G$, the $R$-module $M(H)$ is an {$N_{G}(H)/H$-module}. \\
A morphism $f$ between two Mackey functors $M$ and $N$ is the data of a $R$-linear morphism $f(H) : M(H)\to N(H)$ for every subgroup $H$ of $G$. These morphisms are compatible with transfer, restriction and conjugacy maps. We denote by $Mack_{R}(G)$ the category of Mackey functors for $G$ over $R$.
\begin{ex} 
Let $V$ be an $RG$-module. The fixed point functor $FP_{V}$ is the Mackey functor for $G$ over $R$ defined as follows:\newline For $H\leqslant G$, then $FP_{V}(H)=V^{H}:=\{\ v\in V\ ; \ h\cdot v=v \ \forall\ h\in H\ \}$. If $H\leqslant K\leqslant G$, we have $V^{K}\subseteq V^{H}$, so we define the restriction map $r^{K}_{H}$ as the inclusion map. The transfer map $ t^{K}_{H}: V^{H}\to V^{K}$ is defined by $t_{H}^{K}(v)=\sum_{k\in [K/H]}k\cdot v$ where $[K/H]$ is a set of representatives of $K/H$. The conjugacy maps are induced by the action of $G$ on $V$. 
\end{ex}
It is not hard to see that the construction $V\mapsto FP_{V}$ is a functor from $RG$-$Mod$ to $Mack_{R}(G)$. \newline Conversely we have an obvious functor $ev_{1}: Mack_{R}(G)\to RG$-$Mod$ given by the evaluation at the subgroup $\{1\}$. 
\begin{prop}\cite{tw}
The functor $ev_{1}$ is a left adjoint to the functor $V\mapsto FP_{V}$. I.e. there is a natural isomorphism: 
\begin{equation*}
Hom_{Mack_{R}(G)}(M,FP_{V})\cong Hom_{RG}(M(1),V)
\end{equation*}
for a Mackey functor $M$ and an $RG$-module $V$. 
\end{prop}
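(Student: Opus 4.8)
The plan is to exhibit the natural bijection directly rather than to produce a unit and counit. In one direction, given a morphism of Mackey functors $f\colon M\to FP_V$, I would take its component at the trivial subgroup, $f(1)\colon M(1)\to FP_V(1)=V$. The compatibility of $f$ with the conjugacy maps $c_{g,1}$ for $g\in G$ says exactly that $f(1)$ is $RG$-linear, because by construction the $RG$-module structure on $M(1)$ is given by the maps $c_{g,1}$ and the one on $V=FP_V(1)$ is the original action of $G$ on $V$.

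For the inverse, given an $RG$-linear map $\phi\colon M(1)\to V$, I would set $f(H)=\phi\circ r^H_1\colon M(H)\to V$ for every subgroup $H$. The first point to verify is that the image actually lies in $V^H=FP_V(H)$: for $h\in H$ and $m\in M(H)$, the compatibility axiom gives $c_{h,1}\circ r^H_1=r^{{}^hH}_1\circ c_{h,H}=r^H_1\circ c_{h,H}$, and the triviality axiom gives $c_{h,H}=\mathrm{id}$, so $h\cdot\phi(r^H_1(m))=\phi(c_{h,1}(r^H_1(m)))=\phi(r^H_1(m))$. Next I would check that $f=(f(H))_H$ is a morphism of Mackey functors. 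Compatibility with restrictions is immediate from transitivity of restriction, since the restriction maps in $FP_V$ are the inclusions $V^K\hookrightarrow V^H$. Compatibility with the conjugacy maps follows once more from the compatibility axiom together with the $RG$-linearity of $\phi$, using that the conjugacy maps in $FP_V$ are $v\mapsto x\cdot v$. The only computation with any content is compatibility with transfers: applying the Mackey axiom to $1\subseteq K\supseteq H$ collapses (using ${}^x1=1$ and $1\cap{}^xH=1$) to $r^K_1\circ t^K_H=\sum_{x\in[K/H]}c_{x,1}\circ r^H_1$; composing with $\phi$ and comparing with $t^K_H\big(f(H)(m)\big)=\sum_{x\in[K/H]}x\cdot\phi(r^H_1(m))$, which uses that the transfer in $FP_V$ is the sum of translates and that $\phi$ is $RG$-linear, gives the desired identity.

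It then remains to see that the two assignments are mutually inverse and natural. Starting from $f$ and forming $\phi=f(1)$, the reconstructed morphism has component $\phi\circ r^H_1=f(1)\circ r^H_1=r^H_1\circ f(H)$ at $H$, and this equals $f(H)$ once we recall that $r^H_1$ in $FP_V$ is the inclusion $V^H\hookrightarrow V$. Starting from $\phi$ and building $f$, its component at $1$ is $\phi\circ r^1_1=\phi$ by the triviality axiom. Finally, naturality in $M$ and in $V$ is formal, since both sides of the claimed isomorphism are obtained by evaluating at the trivial subgroup and pre- or post-composing, and both $ev_1$ and the functor $V\mapsto FP_V$ are functorial in the evident way.

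I do not expect a serious obstacle: the argument is a sequence of axiom chases. The only place where a little care is needed is in keeping track of the identifications ${}^x1=1$ and $1\cap{}^xH=1$ when specialising the compatibility and Mackey axioms, and in remembering that the structure maps of $FP_V$ (inclusions for restriction, sums of translates for transfer, multiplication for conjugation) are precisely what make the well-definedness and the transfer computation work out.
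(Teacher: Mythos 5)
Your argument is correct and complete. The paper itself does not prove this proposition—it simply cites Th\'evenaz--Webb \cite{tw}—so there is no in-paper proof to compare against; your direct construction of the natural bijection (component at $1$ in one direction, $\phi\circ r^H_1$ in the other, with the well-definedness check via the compatibility and triviality axioms and the transfer check via the specialised Mackey axiom) is exactly the standard argument one would extract from \cite{tw}, and every axiom chase in it holds up.
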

\begin{re}
The full subcategory of $Mack_{R}(G)$ consisting of fixed point functors $FP_{V}$ is equivalent to the category $RG$-$Mod$ (see proof of Theorem 18.1 \cite{tw}). 
\end{re}
An other definition of Mackey functors was given by Dress in \cite{dress}: 
\begin{de}\label{def_dress}
A bivariant functor $M~=~(M^{*},M_{*})$ from $G$-$set$ to $R$-$Mod$ is a pair of functors from $G$-$set$ to $R$-$Mod$ such that $M^{*}$ is a contravariant functor, and $M_{*}$ is a covariant functor. If $X$ is a finite $G$-set, then the image by the covariant and by the contravariant part coincide. We denote by $M(X)$ this image. A Mackey functor for $G$ over $R$ is a bivariant functor from $G$-$set$ to $R$-$Mod$ such that: 
\begin{itemize}
\item Let $X$ and $Y$ be two finite $G$-sets, $i_{X}$ and $i_{Y}$ the canonical injection of  $X$ (resp. $Y$) in $X\sqcup Y$. Then $M^{*}(i_X)\oplus M^{*}(i_Y)$ and $(M_{*}(i_X),M_{*}(i_Y))$ are inverse isomorphisms, from $M(X)\oplus M(Y)$ to $M(X\sqcup Y)$.
\item If
\begin{equation*}
\xymatrix{
X\ar[r]^{a}\ar[d]^{b}& Y\ar[d]^{c} \\
Z\ar[r]^{d} & T 
}
\end{equation*}
is a pull back diagram of $G$-sets, then the diagram 
\begin{equation*}
\xymatrix{
M(X)\ar[d]_{M_{*}(b)} & M(Y)\ar[l]_{M^{*}(a)}\ar[d]^{M_{*}(c)}\\
M(Z) & M(T)\ar[l]_{M^{*}(d)}
}
\end{equation*}
is commutative. 
\end{itemize}
A morphism between two Mackey functors is a natural transformation of bivariant functors. 
\begin{ex}[2.4.1 \cite{bouc_green}]\label{burnside}
If $X$ is a finite $G$-set, the category of $G$-sets over $X$ is the category with objects $(Y,\phi)$ where $Y$ is a finite $G$-set and $\phi$ is a morphism from $Y$ to $X$. A morphism $f$ from $(Y,\phi)$ to $(Z,\psi)$ is a morphism of $G$-sets $f:Y\to Z$ such that $\psi\circ f=\phi$. 
\newline\indent The Burnside functor at $X$, written $B(X)$, is the Grothendieck group  of the category of $G$-sets over $X$, for relations given by disjoint union. This is a Mackey functor for $G$ over $R$ by extending scalars from $\mathbb{Z}$ to $R$. We will denote $RB$ the Burnside functor after scalars extension. 
\newline If $X$ is a $G$-set, the Burnside group $RB(X^2)$ has a ring structure. A $G$-set $Z$ over $X\times X$ is the data of a $G$-set $Z$ and a map $(b\times a)$ from $Z$ to $X\times X$, denoted by $(X\overset{b}{\leftarrow} Y \overset{a}{\rightarrow} X)$. The product of (the isomorphism class of ) $(X\overset{\alpha}{\leftarrow} Y \overset{\beta}{\rightarrow} X)$ and (the isomorphism class of )$(X\overset{\gamma}{\leftarrow} Z \overset{\delta}{\rightarrow} X)$ is given by (the isomorphism class of) the pullback along $\beta$ and $\gamma$. 
\begin{equation*}
\xymatrix{
& & P\ar@{..>}[dr]\ar@{..>}[dl] & & \\
& Y\ar[dl]_{\alpha}\ar[dr]^{\beta} & & Z\ar[dl]_{\gamma}\ar[dr]^{\delta} & \\
X & & X & &X
}
\end{equation*}
The identity of this ring is (the isomorphism class) $\xymatrix{&X\ar@{=}[rd]\ar@{=}[dl]&\\X& &X }$
\newline In the rest of the paper, we will use the same notation for a $G$-set over $X\times X$ and its isomorphism class in $RB(X\times X)$.
\end{ex}
\end{de}
We need a last definition of Mackey functors which was given by Th\'evenaz and Webb in \cite{tw}, and uses the Mackey algebra. 
\begin{de}
The Mackey  algebra $\mu_{R}(G)$ for $G$ over $R$ is the unital associative algebra with generators  $t_{H}^{K}$, $r^{K}_{H}$ and $c_{g,H}$ for $H\leqslant K\leqslant G$ and $g\in G$, with the following relations:
\begin{itemize}
\item $\sum_{H\leqslant G}t^{H}_{H}=1_{\mu_{R}(G)}$. 
\item $t^{H}_{H}=r^{H}_{H}=c_{h,H}$ for $H\leqslant G$ and $h\in H$. 
\item $t^{L}_{K}t_{H}^{K}=t^{L}_{H}$, $r^{K}_{H}r^{L}_{K}=r^{L}_{H}$ for $H\subseteq K\subseteq L$. 
\item $c_{g',{^{g}H}}c_{g,H}=c_{g'g,H}$, for $H\leqslant G$ and $g,g'\in G$. 
\item $t^{{^{g}K}}_{{^{g}H}}c_{g,H}=c_{g,K}t^{K}_{H}$ and $r^{{^{g}K}}_{{^{g}H}}c_{g,K}=c_{g,H}r^{K}_{H}$, $H\leqslant K$, $g\in G$. 
\item $r^{H}_{L}t^{H}_{K}=\sum_{h\in [L\backslash H / K]} t^{L}_{L\cap {^{h} K}} c_{h, L^{h} \cap K} r^{K}_{L^{h}\cap K}$ for $L\leqslant H \geqslant K$. 
\item All the other products of generators are zero. 
\end{itemize}
\end{de}
\begin{de}
A Mackey functor for $G$ over $R$ is a left $\mu_{R}(G)$-module.
\end{de}
\begin{prop}\label{basis}
The Mackey algebra is a free $R$-module, of finite rank independent of $R$. The set of elements $t^{H}_{K}xr^{L}_{K^{x}}$, where $H$ and $L$ are subgroups of $G$, where $x\in [H\backslash G/L]$, and $K$ is a subgroup of $H{\cap~{\ ^{x}L}}$ up to $(H\cap {\ ^{x}L})$-conjugacy, is an $R$-basis of $\mu_{R}(G)$.  
\end{prop}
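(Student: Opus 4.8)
The plan is to establish the spanning property and the linear independence separately, the first by a reduction argument inside the defining presentation and the second by a faithful-representation argument that simultaneously yields the rank independence of $R$.

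First I would show that the listed elements span $\mu_R(G)$. Every element of $\mu_R(G)$ is an $R$-linear combination of words in the generators $t_H^K$, $r_H^K$, $c_{g,H}$. Using the relations, I want to normalize such a word. The conjugacy relations $t^{{}^gK}_{{}^gH}c_{g,H}=c_{g,K}t^K_H$ and $r^{{}^gK}_{{}^gH}c_{g,K}=c_{g,H}r^K_H$ let me move conjugations past transfers and restrictions, so that a word can be rewritten with all the conjugacy maps collected in the middle; the transitivity relations $t^L_Kt^K_H=t^L_H$ and $r^K_Hr^L_K=r^L_H$ collapse consecutive transfers (resp.\ restrictions) into a single one; and the Mackey relation $r^H_Lt^H_K=\sum_{h}t^L_{L\cap{}^hK}c_{h,\cdot}r^K_{\cdot}$ rewrites any occurrence of a restriction immediately followed by a transfer as a sum of words of the shape transfer–conjugation–restriction. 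The remaining relation ``all other products are zero'' kills words in which the source/target subgroups of adjacent generators do not match. Iterating, any word reduces to an $R$-combination of elements $t^H_K\,c_{x,\cdot}\,r^L_{K^x}$; absorbing the conjugation's label $x$ (modulo the subgroup it must respect) and noting that we may take $x$ in a fixed set of double-coset representatives $[H\backslash G/L]$ and $K$ up to $(H\cap{}^xL)$-conjugacy, using the compatibility and transitivity axioms to adjust within a double coset, gives exactly the claimed spanning set.

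Second, I would prove linear independence and the rank count. The clean way is to exhibit a module on which these elements act by linearly independent operators whose rank is visibly $R$-independent — for instance, let $\mu_R(G)$ act on $\bigoplus_{H\leqslant G} RB(G/H)$ (equivalently, use that $\mu_R(G)$ is the endomorphism algebra of the direct sum of representable Mackey functors / of $\Omega_G$), where $RB$ is the Burnside functor of Example~\ref{burnside}. One computes the effect of $t^H_Kxr^L_{K^x}$ on the canonical $R$-basis of this permutation-type module and checks that distinct basis elements of the proposed set send a suitable basis vector to $R$-linearly independent vectors; since the whole construction is defined over $\mathbb{Z}$ and then base-changed, the rank of $\mu_R(G)$ is at most (and, with spanning, equal to) the number of listed elements, a quantity that does not depend on $R$. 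Counting these elements — pairs $(H,L)$, a double coset $x\in[H\backslash G/L]$, and a subgroup $K\leqslant H\cap{}^xL$ up to $(H\cap{}^xL)$-conjugacy — gives the asserted $R$-independent finite rank.

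The main obstacle is the bookkeeping in the reduction step: controlling the double-coset representatives and the conjugacy-class choice for $K$ so that the normal form is genuinely unique rather than merely attainable, i.e.\ making sure the rewriting does not produce the same reduced element in two different guises and that the Mackey relation's iterated use terminates. This is exactly the point where one leans on the faithful action on $\bigoplus_H RB(G/H)$ to certify that no further collapse occurs, so the two halves of the argument are genuinely intertwined; I would present the action first and use it as the referee for the normal-form computation.
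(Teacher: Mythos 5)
The paper gives no argument of its own here---its ``proof'' is the single citation ``Section~3 of~\cite{tw}''---so the relevant comparison is with Th\'evenaz--Webb's argument, and your proposal reconstructs essentially that route: reduce an arbitrary word in the generators to the shape $t^H_K\,c_x\,r^L_{K^x}$ using the conjugation, transitivity, Mackey, and vanishing relations to obtain spanning, then certify linear independence and the $R$-independent rank by a concrete faithful action. That separation is sound, and your remark that a unique normal form is not needed once the count is certified externally is exactly the right way to sidestep the bookkeeping.

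One point deserves care. The module you propose to act on, $\bigoplus_{H}RB(G/H)$, is (as a $\mu_R(G)$-module) the Burnside Mackey functor $RB$ itself, and it is not the same thing as the progenerator $B_{\Omega_G}$ whose endomorphism ring is $\mu_R(G)$; the parenthetical ``equivalently'' conflates the two, and faithfulness of $RB$ is not something to assert without proof (it corresponds to the regular module only for $B_{\Omega_G}$, whose underlying module is $\bigoplus_{H,L}RB(G/H\times G/L)=RB(\Omega_G^2)$, i.e.\ the regular module, for which faithfulness is trivial but the independence computation is then circular). The cleaner certificate---and the one already available inside this paper---is Proposition~\ref{burnside_alg}: under the explicit isomorphism $\mu_R(G)\cong RB(\Omega_G^2)$, the proposed elements $t^H_K\,x\,r^L_{K^x}$ map precisely to the isomorphism classes of transitive $G$-sets over $\Omega_G\times\Omega_G$, which form the canonical free $\mathbb{Z}$-basis of $B(\Omega_G^2)$ before scalar extension. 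That gives linear independence, freeness, and the $R$-independence of the rank in one stroke, and meshes with your spanning argument to finish the proof. So your approach is correct in outline; replace the intended faithful module by this Burnside-algebra identification and the independence half becomes airtight.
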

\begin{proof}
Section $3$ of \cite{tw}. 
\end{proof}
Let us recall that the Mackey algebra is isomorphic to a Burnside algebra:
\begin{prop}[Proposition 4.5.1 \cite{bouc_green}]\label{burnside_alg}
The Mackey algebra $\mu_{R}(G)$ is isomorphic to $RB(\Omega_{G}^{2})$, where $\Omega_{G}=\sqcup_{L\leqslant G} G/L$. 
\end{prop}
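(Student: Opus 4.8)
Proof proposal for Proposition \ref{burnside_alg} ($\mu_{R}(G) \cong RB(\Omega_{G}^{2})$).

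The plan is to exhibit an explicit $R$-linear isomorphism sending the standard basis of $\mu_{R}(G)$ described in Proposition \ref{basis} to the standard basis of $RB(\Omega_{G}^{2})$, and then to check it is multiplicative. First I would recall that, by the double coset description of morphisms in the category of $G$-sets over $\Omega_{G}^{2}$, an $R$-basis of $RB(\Omega_{G}^{2})$ is given by the (isomorphism classes of) transitive $G$-sets over $\Omega_{G} \times \Omega_{G} = \bigsqcup_{H,L \leqslant G} G/H \times G/L$; such a transitive piece mapping into the $(H,L)$-component is of the form $(G/H \xleftarrow{\ } G/K' \xrightarrow{\ } G/L)$ where, after choosing the base point, $K'$ is (conjugate to) a subgroup of $H \cap {}^{x}L$ for some $x \in [H\backslash G/L]$, taken up to $(H \cap {}^{x}L)$-conjugacy. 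This is the exact same indexing set as the basis $t^{H}_{K} x\, r^{L}_{K^{x}}$ of $\mu_{R}(G)$ from Proposition \ref{basis}, so the assignment
\begin{equation*}
\Phi\colon\ t^{H}_{K}\, x\, r^{L}_{K^{x}}\ \longmapsto\ \bigl(G/H \xleftarrow{\ p\ } G/K \xrightarrow{\ q_x\ } G/L\bigr),
\end{equation*}
where $p$ is the natural projection and $q_x$ is the composite $G/K \to G/K^{x} \xrightarrow{\ \cdot x^{-1}\ } G/L$, is a well-defined bijection between bases and hence extends to an $R$-linear isomorphism.

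Next I would verify that $\Phi$ respects the units: the unit of $\mu_{R}(G)$ is $\sum_{H \leqslant G} t^{H}_{H}$, and under $\Phi$ each $t^{H}_{H}$ goes to the identity span $(G/H = G/H = G/H)$, so the sum goes to the identity element $\xymatrix@C=1em@R=1em{&\Omega_{G}\ar@{=}[rd]\ar@{=}[dl]&\\\Omega_{G}& &\Omega_{G}}$ of $RB(\Omega_{G}^{2})$ described in Example \ref{burnside}. The real content is to check $\Phi(a b) = \Phi(a)\Phi(b)$ on basis elements. Here I would reduce to the generators: it suffices to check that $\Phi$ intertwines left and right multiplication by the generators $t_{H}^{K}$, $r^{K}_{H}$, $c_{g,H}$ with composition of spans. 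Each of the defining relations of $\mu_{R}(G)$ — transitivity of transfers and restrictions, the cocycle relation for conjugations, the compatibility relations, and crucially the Mackey relation $r^{H}_{L} t^{H}_{K} = \sum_{h \in [L\backslash H/K]} t^{L}_{L \cap {}^{h}K}\, c_{h, L^{h}\cap K}\, r^{K}_{L^{h}\cap K}$ — corresponds on the Burnside side to a pullback computation of $G$-sets, and the double-coset formula for the pullback $G/L \times_{G/H} G/K \cong \bigsqcup_{h\in[L\backslash H/K]} G/(L \cap {}^{h}K)$ is precisely the combinatorial identity that makes both sides agree. The "all other products are zero" relations match the fact that a span into the $(H,L)$-block composed with a span into the $(H',L')$-block is zero in $RB(\Omega_{G}^{2})$ unless $L = H'$.

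The main obstacle is purely bookkeeping: tracking base points and making sure the chosen coset representatives on the Mackey-algebra side line up with the chosen representatives for the transitive constituents of the relevant pullbacks, so that the Mackey relation and the pullback formula produce literally the same indexed sum rather than two sums that happen to be equal only after a further change of variables. Once the dictionary "transfer $\leftrightarrow$ projection span", "restriction $\leftrightarrow$ reversed projection span", "conjugation $\leftrightarrow$ translation isomorphism", and "multiplication $\leftrightarrow$ pullback" is set up carefully, each verification is routine. I would therefore present the bijection of bases, state the dictionary, do the Mackey relation in detail as the representative case, and leave the remaining relations to the reader, citing Proposition 4.5.1 of \cite{bouc_green} for the full argument.
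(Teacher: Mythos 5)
Your proposal takes essentially the same approach as the paper: the dictionary sending transfer $t^K_H$ to the projection span $G/K \leftarrow G/H = G/H$, restriction $r^K_H$ to the reversed span $G/H = G/H \to G/K$, and conjugation $c_{g,H}$ to the translation span $G/{}^gH = G/{}^gH \to G/H$ is precisely the map $\beta$ the paper defines on generators, and your $\Phi$ on the basis element $t^H_K\,x\,r^L_{K^x}$ is just $\beta$ applied to the product $t^H_K\,c_{x,K^x}\,r^L_{K^x}$. The paper is terser, giving only the three generator formulas and citing Bouc for the check that they respect the relations and give a bijection; your basis-to-basis matching and the pullback verification of the Mackey relation are a correct elaboration of what the paper leaves implicit.
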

\begin{proof}
Let $H\leqslant K$ be two subgroups of $G$, then we denote by $\pi^{K}_{H}$ the natural surjection from $G/H$ to $G/K$. If $g\in G$, then we denote by $\gamma_{H,g}$ the map from $G/\ ^{g}H$ to $G/H$ defined by $\gamma_{H,g}(xgHg^{-1}) =xgH$. The isomorphism $\beta$ is defined on the generators of $\mu_{R}(G)$ by: $$\beta(t_{H}^{K}) = \xymatrix{ & G/H\ar[dl]_{\pi^{K}_{H}}\ar@{=}[rd]  & \\ \Omega_{G}\supset G/K&& G/H\subset \Omega_{G}}$$
$$\beta(r_{H}^{K}) = \xymatrix{ & G/H\ar[dr]^{\pi^{K}_{H}}\ar@{=}[ld]  & \\ \Omega_{G}\supset G/H&& G/K\subset \Omega_{G}}$$
$$\beta(c_{g,H}) =  \xymatrix{ & G/\ ^gH\ar[dr]^{\gamma_{H,g}}\ar@{=}[ld]  & \\ \Omega_{G}\supset G/\ ^gH&& G/H\subset \Omega_{G}}$$
\end{proof}
We will make an intensive use of the connection between the different categories of Mackey functors, so let us recall the following well known result:
\begin{prop}\cite{tw}
The different definitions of Mackey functors for $G$ over $R$ are equivalent. 
\end{prop}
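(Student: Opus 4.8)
The plan is to establish an equivalence of categories between each of the three descriptions of Mackey functors (Green's axiomatic version, Dress's bivariant version, and the module-theoretic version over $\mu_R(G)$) and to organise the argument so that only two comparisons need to be made, the third following by transitivity. Concretely I would first show $\mathrm{Mack}_R(G)$ (Green's definition) is equivalent to $\mu_R(G)\text{-}\mathrm{Mod}$, and then show Dress's category is equivalent to one of these two; I would choose to compare Dress's category directly with $\mu_R(G)\text{-}\mathrm{Mod}$ as well, using Proposition~\ref{burnside_alg}, since the Burnside-algebra picture is the natural bridge between ``functors on $G\text{-}set$'' and ``modules over an algebra built from the transitive $G$-sets''.

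For the first equivalence, given a Green Mackey functor $M$, I would form the $R$-module $\widetilde{M} = \bigoplus_{H\leqslant G} M(H)$ and let the generators $t_H^K, r^K_H, c_{g,H}$ of $\mu_R(G)$ act via the corresponding structure maps of $M$ (extended by zero on the summands where they are not defined), with $t^H_H$ acting as the projection onto $M(H)$. One checks the defining relations of $\mu_R(G)$ hold precisely because of the triviality, transitivity, compatibility and Mackey axioms, so $\widetilde M$ is a $\mu_R(G)$-module; conversely a $\mu_R(G)$-module $V$ decomposes as $V = \bigoplus_H t^H_H V$ because $\sum_H t^H_H = 1$ and the $t^H_H$ are orthogonal idempotents, and setting $M(H) = t^H_H V$ with structure maps given by the action of the generators recovers a Green Mackey functor. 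These two constructions are mutually inverse on objects and extend to morphisms in the obvious way, giving an isomorphism of categories; the key input making the relations match up is exactly Proposition~\ref{basis}, which guarantees the generators and relations listed are a complete presentation.

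For the comparison with Dress's definition, I would use the standard dictionary: a bivariant functor $(M^*,M_*)$ on $G\text{-}set$ satisfying the additivity and pull-back (Mackey square) conditions is determined by its values on the transitive $G$-sets $G/H$ together with the maps induced by the canonical morphisms $\pi^K_H : G/H \to G/K$ and $\gamma_{H,g}: G/{}^gH \to G/H$, because every $G$-set is a disjoint union of transitive ones (handled by the additivity axiom) and every $G$-map between transitive $G$-sets factors as a composite of a $\gamma$ and a $\pi$. Restriction along $\pi^K_H$ gives $r^K_H$, the covariant map along $\pi^K_H$ gives $t^K_H$, and the maps attached to $\gamma_{H,g}$ give the conjugations; the pull-back axiom applied to the Mackey square for $G/L \to G/K \leftarrow G/H$ yields precisely the Mackey relation, and the remaining relations come from functoriality of $M^*$ and $M_*$. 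This is most cleanly phrased by saying a Dress Mackey functor is the same as an $R$-linear bivariant functor on the full subcategory spanned by the $G/H$, hence (unwinding, or invoking Proposition~\ref{burnside_alg} which identifies $\mu_R(G)$ with $RB(\Omega_G^2)$) the same as a $\mu_R(G)$-module. Composing with the first equivalence closes the triangle.

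The main obstacle is the Dress-to-algebra direction: one has to verify carefully that the pull-back axiom, applied only to pull-back squares of $G$-sets, is equivalent to the single Mackey double-coset relation, and that no additional relations are imposed by more complicated $G$-sets or $G$-maps than the transitive ones and the generators $\pi^K_H,\gamma_{H,g}$. This amounts to decomposing an arbitrary pull-back square of $G$-sets into the ``elementary'' squares governing the Mackey relation and the compatibility relations, and checking that the decomposition of $G/H \times_{G/K} G/L$ into $H\backslash K/L$ orbits is exactly what the double-coset formula records. Once that bookkeeping is done the rest is formal, and I would lean on the cited treatment in \cite{tw} (and the Burnside-functor formalism of \cite{bouc_green}) for the routine verifications rather than reproduce every diagram chase.
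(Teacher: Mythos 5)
Your argument is essentially correct, and you identify the right triangle of equivalences (Green's category, $\mu_R(G)$-Mod, and Dress's category, bridged via the Burnside algebra $RB(\Omega_G^2)$ and Proposition~\ref{burnside_alg}). Your treatment of the Green-to-module comparison matches the paper's exactly: $\widetilde M=\bigoplus_H M(H)$ one way, $M(H)=t^H_H V$ the other. Where you diverge is in the Dress-to-algebra comparison. You take the classical route: a Dress functor is determined by its restriction to transitive $G$-sets $G/H$ and the maps along $\pi^K_H$ and $\gamma_{H,g}$, and the pull-back axiom applied to the Mackey square for $G/L\to G/K\leftarrow G/H$ yields the double-coset relation. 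You correctly flag the annoying bookkeeping this requires (checking that the pull-back axiom on arbitrary squares imposes nothing beyond the elementary relations, via the orbit decomposition of $G/H\times_{G/K}G/L$). The paper deliberately avoids this: it evaluates a Dress functor $M$ at the single $G$-set $\Omega_G=\bigsqcup_{L\leqslant G}G/L$, making $M(\Omega_G)$ an $RB(\Omega_G^2)$-module, and constructs a quasi-inverse via $Y_V(X)=\mathrm{Hom}_{RB(\Omega_G^2)}(RB_X,V)$. Full faithfulness is then a one-line calculation using $\sum_H n_H M_*(i_H^X)M^*(i_H^X)=\mathrm{Id}_{M(X)}$ together with naturality through $\Omega_G$, and density follows from $Y_V(\Omega_G)=\mathrm{Hom}_{RB(\Omega_G^2)}(RB(\Omega_G^2),V)\cong V$. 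The trade-off is exactly what the paper advertises: its approach is ``more conceptual'' and choice-free, whereas yours is more explicit but shifts the real work into the claim that ``no additional relations are imposed,'' which (as you note) ultimately has to be outsourced to \cite{tw}. Both routes are legitimate; the paper's buys you a cleaner fully-faithful-and-dense argument at the cost of introducing the $Y_V$ construction, while yours stays closer to the combinatorics but leaves the key verification as cited bookkeeping.
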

\begin{proof}[Sketch of proof] The idea of the proof can be found in various places such as \cite{tw}, or \cite{bouc_green}. Here we propose a quite different, and more conceptual  approach where we do not need to choose representatives. For this proof, we denote by $Mack_{R}^{(a)}(G)$ the category of Mackey functors for $G$ over $R$ in the sense of Green and by $Mack_{R}^{(b)}(G)$ the category of Mackey functors for Dress. The most cost-effective way seems to prove the following equivalences:
\begin{equation*}
\xymatrix{
Mack_{R}^{(a)}(G)\ar[dr]^{\cong} &&& Mack_{R}^{(b)}(G)\ar[dl]_{\cong} \\
& \mu_{R}(G)\hbox{-}Mod\ar[r]^{\cong} &  RB(\Omega_{G}^2)\hbox{-}Mod
}
\end{equation*}
For the equivalence $Mack_{R}^{a}(G) \cong \mu_{R}(G)$-$Mod$, if $M\in Mack_{R}^{(a)}(G)$, then $\bigoplus_{H\leqslant G} M(H)$ is a $\mu_{R}(G)$-module. Conversely, if $V$ is a $\mu_{R}(G)$-module, then one can define a Mackey functor $M$ by: for $H\leqslant G$, then $M(H):=t^{H}_{H}V$. The Mackey functor structure of $M$ is induced by the module structure of $V$.
\newline The equivalence between $\mu_{R}(G)$-$Mod$ and $RB(\Omega_{G}^{2})$-$Mod$ follows from the isomorphism of Proposition \ref{burnside_alg}. 
\newline The equivalence $Mack_{R}^{(b)}(G)\cong RB(\Omega_{G}^{2})$-$Mod$, is a bit more technical. Let $M$ be a Mackey functor for $G$ over $R$, using Dress definition. Then $M(\Omega_{G})$ is a $RB(\Omega_{G}^{2})$-module. The action of a $G$-set $Z=(\Omega_{G}\overset{b}{\leftarrow} Z\overset{a}{\rightarrow} \Omega_{G} )$ over $\Omega_{G}\times \Omega_{G}$ on $M(\Omega_{G})$ is defined by: let $m\in M(\Omega_{G})$, 
\begin{equation*}
Z\cdot m= M_{*}(b)\circ M^{*}(a)(m).
\end{equation*}
Let $V$ be an $RB(\Omega_{G}^{2})$-module, let $X$ be a finite $G$-set. Then $RB_{X} =RB(\Omega_{G}\times X)$ is an $RB(\Omega_{G}^{2})$-module. The action of $RB(\Omega_{G}^{2})$ on $RB_{X}$ is given by left multiplication. Let $f : X\to Y$ be a morphism between two finite $G$-sets. Let us denote by $\hat{f}$ the morphism $Id_{\Omega_{G}}\times f$. Let $\phi : Z\to X\times \Omega_{G}$ be a $G$-set over $X\times \Omega_{G}$ and let $\psi: W\to Y\times \Omega_{G}$ be a $G$-set over $Y\times \Omega_{G}$. Then there is a morphism of $RB(\Omega_{G}^{2})$-modules: $RB(f)_{*} : RB_{X}\to RB_{Y}$ defined by:
\begin{equation*}
RB_{*}(f)(\phi)=\phi\circ \hat{f}.
\end{equation*}
On the other direction, there is a morphism of $RB(\Omega_{G}^{2})$-modules defined by: $$RB^{*}(f)(\psi)=\pi,$$ where $\pi : P\to X\times \Omega_{G}$ is a $G$-set over $X\times \Omega_{G}$ such that the following diagram is a pullback diagram:
\begin{equation*}
\xymatrix{
P\ar[r]\ar[d]^{\pi} & W\ar[d]^{\psi}\\
X\times \Omega_{G} \ar[r]^{\hat{f}}& Y\times \Omega_{G}
}
\end{equation*}
then one can defined a Mackey functor $Y_{V}$ for $G$ over $R$ by:
\begin{equation*}
Y_{V}(X) = Hom_{RB(\Omega_{G}^{2})}(RB_{X},V). 
\end{equation*}
The Mackey functor structure of $Y_{V}$ is induced by the  covariant and the contravariant part of $RB$. 
\newline The functor from $Mack_{R}^{(b)}(G)$ to $RB(\Omega_{G}^{2})$-$Mod$ sending $M$ to $M(\Omega_{G})$ is fully faithful, indeed, if $X$ is a finite $G$-set, then let $X=\bigsqcup_{H\leqslant G} n_{H}G/H$ be the decomposition of $X$ as sum of transitive $G$-sets. If a transitive $G$-set $G/H$ appears with a positive coefficient in $X$, then we denote by $i_{H}^{X}$ the canonical injection from $G/H$ to $X$. Moreover, we denote by $i_{G/H}$ the canonical injection from $G/H$ to $\Omega_{G}$. Let $\phi : M\to N$ be a morphism of Mackey functors, then since $\phi$ is a natural transformation of bivariant functors, and since $\sum_{H}n_{H}M_{*}(i_{H}^{X})M^{*}(i_{H}^{X})=Id_{M(X)}$, we have:
\begin{align*}
\phi_{X} &= \sum_{H\leqslant G} n_{H} N_{*}(i_{H}^{X}) \phi_{G/H} M^{*}(i_{H}^{X})\\
&= \sum_{H\leqslant G} n_{H} N_{*}(i_{H}^{X})N^{*}(i_{G/H}) \phi_{\Omega_{G}} M_{*}(i_{G/H})M^{*}(i_{H}^{X}).
\end{align*}
So $\phi$ is characterized by its evaluation at $\Omega_{G}$. Conversely with this formula, if we have $f:M(\Omega_{G})\to N(\Omega_{G})$ a morphism of $RB(\Omega_{G}^{2})$-modules, then one can defined a morphism of Mackey functors $\phi : M\to N$ such that $\phi_{\Omega_{G}}=f$. The fact that this construction gives a natural transformation of bivariant functors follows from the fact that $f$ is a morphism of $RB(\Omega_{G}^{2})$-modules. 
\newline Moreover the functor $M\mapsto M(\Omega_{G})$ is dense. Indeed let $V$ be an $RB(\Omega_{G}^{2})$-module, then
\begin{align*}
Y_{V}(\Omega_{G}) &= Hom_{RB(\Omega_{G}^{2})}(RB_{\Omega_{G}},V)\\
&=Hom_{RB(\Omega_{G}^{2})}(RB(\Omega_{G}^{2}),V)\\
&\cong V. 
\end{align*}
Now, this isomorphism is natural in $V$. So, the categories $Mack_{R}^{(b)}(G)$ and $RB(\Omega_{G}^{2})$-$Mod$ are equivalents, and since $Y_{V}(\Omega_{G})\cong V$, by unicity of the adjoint, the functor $V\mapsto Y_{V}$ is a quasi-inverse equivalence to the functor $M\mapsto M(\Omega_{G})$. 
\end{proof}
In the rest of the paper, if no confusion is possible, we denote by $Mack_{R}(G)$ the category of Mackey functors for $G$ over $R$ for one of these three definitions. 
\section{Blocks of Mackey algebras}
Let $G$ be a finite group and $(K,\mathcal{O},k)$ be a $p$-modular system for $G$ which is ``large enough" for all the $N_{G}(H)/H$ for $H\leqslant G$. In \cite{tw}, Th\'evenaz and Webb proved that there is a bijection between the blocks of the group algebra $\mathcal{O}G$ and the blocks of $Mack_{\mathcal{O}}(G,1)$, where $Mack_{\mathcal{O}}(G,1)$ is the full subcategory of $Mack_{\mathcal{O}}(G)$ consisting of Mackey functors which are projective relatively to the $p$-subgroups of $G$. 
\newline\indent The category $Mack_{\mathcal{O}}(G,1)$ is equivalent to the category of $\mu_{\mathcal{O}}^{1}(G)$-modules where $\mu_{\mathcal{O}}^{1}(G)$ is the subalgebra of $\mu_{\mathcal{O}}(G)$ generated by the $r^{H}_{Q}$, $t^{H}_{Q}$, $c_{Q,x}$ where $Q\leqslant H\leqslant G$, $x\in G$ and $Q$ is a $p$-group. This subalgebra is called the $p$-local Mackey algebra of $G$ over $\mathcal{O}$. The same definitions hold for $K$ or $k$. 
\begin{theo}\label{block}
The set of central primitive idempotents of the $p$-local Mackey algebra $\mu_{\mathcal{O}}^{1}(G)$ is in bijection with the set of the blocks of $\mathcal{O}G$. 
\end{theo}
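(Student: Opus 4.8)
The plan is to compare the two block decompositions through the \emph{linkage} of simple modules, using that the corner of $\mu_R^1(G)$ at the trivial subgroup is the group algebra. Since $\mathcal{O}$ is complete, idempotents lift along $\mathcal{O}\twoheadrightarrow k$ compatibly with this corner identification, so it suffices to produce a bijection between the primitive central idempotents of $\mu_k^1(G)$ and the blocks of $kG$, and then transport it back to $\mathcal{O}$.

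First I would record the basic dictionary. Put $e=t^1_1$; then $e\mu_k^1(G)e\cong kG$ (the only surviving basis elements $t^1_K x r^1_{K^x}$ are the $c_{x,1}$, $x\in G$), and the evaluation functor $\mathrm{ev}_1\colon\mu_k^1(G)\hbox{-}\mathrm{Mod}\to kG\hbox{-}\mathrm{Mod}$, $M\mapsto M(1)$, identifies with $\mathrm{Hom}_{\mu_k^1(G)}(\mu_k^1(G)e,-)$. As $\mu_k^1(G)e$ is a direct summand of $\mu_k^1(G)$, this functor is exact and admits the left adjoint $L=\mu_k^1(G)e\otimes_{kG}-$, with $\mathrm{ev}_1\circ L\cong\mathrm{Id}$ (so $L$ is fully faithful). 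By \cite{tw} the simple $\mu_k^1(G)$-modules are the simple Mackey functors $S^G_{Q,V}$, where $Q$ runs over $G$-classes of $p$-subgroups and $V$ over simple $k\overline{N}_G(Q)$-modules; moreover $S^G_{Q,V}(H)=0$ unless $H$ contains a $G$-conjugate of $Q$, and $S^G_{1,V}(1)=V$. Hence $V\mapsto S^G_{1,V}$ is a bijection from the simple $kG$-modules onto those simple $\mu_k^1(G)$-modules that survive $\mathrm{ev}_1$, and $\mathrm{ev}_1$ annihilates all the others.

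Next comes the ``easy'' half. Since $P^{kG}_V$ is a summand of $kG$, the module $L(P^{kG}_V)$ is a summand of $L(kG)=\mu_k^1(G)e$, hence projective; a top computation via the adjunction gives $L(P^{kG}_V)\cong P^G_{1,V}$ and therefore $\mathrm{ev}_1(P^G_{1,V})=P^{kG}_V$. Because $\mathrm{ev}_1$ is exact and carries the composition series of a module to that of its value at $1$ (discarding the factors $S^G_{Q,V}$ with $Q\neq 1$), it follows that $S^G_{1,V}$ and $S^G_{1,V'}$ lie in a common block of $\mu_k^1(G)$ whenever $V$ and $V'$ lie in a common block of $kG$. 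So there is a well-defined map $\psi$ from blocks of $kG$ to blocks of $\mu_k^1(G)$ taking the block of $V$ to the block of $S^G_{1,V}$, and the theorem reduces to showing that $\psi$ is bijective, i.e. that (I) every block of $\mu_k^1(G)$ contains some $S^G_{1,W}$, and (II) no block of $\mu_k^1(G)$ contains two simples $S^G_{1,W},S^G_{1,W'}$ whose associated $kG$-modules lie in different blocks of $kG$.

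Both (I) and (II) I would deduce from the compatibility of the whole picture with the Brauer construction. For each $p$-subgroup $Q$ there is a ``Brauer quotient at $Q$'' functor from $\mu_k^1(G)$-modules to $k\overline{N}_G(Q)$-modules carrying $S^G_{Q,V}$ to $V$; combining this with Brauer's First Main Theorem one expects that $S^G_{Q,V}$ belongs to the block $b^{\mu}$ of $\mu_k^1(G)$, where $b$ is the Brauer correspondent in $kG$ of the block of $kN_G(Q)$ containing the inflation of $V$ (this correspondent exists because $Q$, being a normal $p$-subgroup of $N_G(Q)$, lies in a defect group of that block --- here the preservation of defect groups recalled in the introduction, cf. \cite{bouc_blocks}, enters). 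Granting this description, (II) is immediate; and for (I) one observes that $P^G_{Q,V}$ is a summand of a Mackey functor induced from $N_G(Q)$ of a projective inflated from $\overline{N}_G(Q)$, whose fixed-point/Brauer filtration, followed down to the trivial subgroup, exhibits a composition factor $S^G_{1,W}$ inside $P^G_{Q,V}$, so that the block of $S^G_{Q,V}$ meets the trivial subgroup and equals $\psi(b)$. The main obstacle is precisely this Brauer-construction compatibility: determining which block of $kG$ each $S^G_{Q,V}$ lies in and verifying that this assignment is constant along the linkage graph of $\mu_k^1(G)$. That is the technical heart of the argument, and the place where genuine input from the theory of $p$-permutation modules and Brauer pairs is required.
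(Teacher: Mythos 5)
Note first that the paper itself does not prove Theorem~\ref{block}: it is quoted from Th\'evenaz--Webb \cite{tw} (Section~17), with an explicit idempotent formula later supplied by Bouc \cite{bouc_blocks}. So there is no ``paper's proof'' to line your argument up against; the comparison has to be with the standard argument. Your reduction to $k$ by idempotent lifting, the corner identification $t^1_1\mu_k^1(G)t^1_1\cong kG$, the adjoint pair $(L,\mathrm{ev}_1)$ with $\mathrm{ev}_1\circ L\cong\mathrm{Id}$, and the computation $L(P^{kG}_V)\cong P^G_{1,V}$ are all correct, and the ``easy half'' (that $\psi$ is well defined) is sound: since $\mathrm{ev}_1$ is exact and the only simples surviving $\mathrm{ev}_1$ are the $S^G_{1,W}$, a shared composition factor of two $kG$-PIMs lifts to a shared composition factor of the corresponding Mackey PIMs. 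For surjectivity (your~(I)) you overcomplicate things: the clean argument is simply that $P^G_{Q,V}(1)$ is a nonzero indecomposable $p$-permutation module (Corollary~12.8 of \cite{tw}, also quoted in the paper), so $P^G_{Q,V}$ has a composition factor $S$ with $S(1)\neq 0$, forcing $S=S^G_{1,W}$ for some $W$; the detour through an induced--inflated resolution and a ``fixed-point/Brauer filtration'' is not needed and, as you phrase it, already invokes the unproven block assignment.

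The genuine gap is your~(II), and you correctly identify it, but the sketch you give for closing it is not yet an argument. The statement ``$S^G_{Q,V}$ belongs to $b^\mu$, where $b$ is the Brauer correspondent of the block of $kN_G(Q)$ containing the inflation of $V$'' presupposes that the blocks of $\mu_k^1(G)$ are already labelled by blocks of $kG$ --- which is what is being proved. The non-circular form of what is needed is: define, for each pair $(Q,V)$, a block $\beta(Q,V)^G$ of $kG$ by Brauer correspondence (using that $Q$ is normal in $N_G(Q)$, hence contained in every defect group of the block of the inflation of $V$), and then prove that this assignment is constant along the linkage graph of $\mathrm{Mack}_k(G,1)$, e.g.\ by showing that $\mathrm{Ext}^1_{\mu_k^1(G)}(S^G_{Q,V},S^G_{Q',V'})\neq 0$ forces $\beta(Q,V)^G=\beta(Q',V')^G$, or that all composition factors of $P^G_{Q,V}$ carry the same $\beta(\cdot)^G$. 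This is exactly the content of Th\'evenaz--Webb's Section~17 and uses the interaction of the Brauer construction (Lemma~\ref{lemme_brauer}) with vertices and defect groups of $p$-permutation modules; without it your proof reduces the theorem to an equivalent open claim rather than proving it.
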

\begin{re}\label{block_re}
The bijection is moreover explicit. If $b$ is a block of $\mathcal{O}G$ then Bouc gave an explicit formula for the corresponding central idempotent of $\mu_{\mathcal{O}}^{1}(G)$ denoted by $b^{\mu}$ (Theorem 4.5.2 \cite{bouc_blocks}). 
\end{re}
Using the equivalence of categories $Mack_{\mathcal{O}}(G,1)\cong \mu_{\mathcal{O}}^{1}(G)$-$Mod$, we have a decomposition of $Mack_{\mathcal{O}}(G,1)$ into a product of categories, which were called the blocks of $Mack_{\mathcal{O}}(G,1)$ by Th\'evenaz and Webb in \cite{tw} Section $17$. The formula of the idempotents of $\mu_{R}^{1}(G)$ (Remark \ref{block_re}) is rather technical but it is immediate to check that the action of a block idempotent $b^{\mu}$ on the evaluation at $\{1\}$ of a Mackey functor $M$ (using Green's notation for evaluation) is given as follows: let $m\in M(1)$, writing the idempotent $b=\sum_{x\in G}b(x)x$ where $b(x)\in\mathcal{O}$, we have
\begin{equation}
b^{\mu}\cdot m=\sum_{x\in G} b(x)c_{x,1}(m).
\end{equation}
\begin{prop}
Let $R=k$ or $\mathcal{O}$. The set of isomorphism classes of projective indecomposable Mackey functors in a block $b^{\mu}$ of $Mack_{R}(G,1)$ is in bijection with the set of isomorphism classes of indecomposable $p$-permutation modules contained in the block $RGb$.
\end{prop}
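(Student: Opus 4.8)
The plan is to show that the two sides are indexed by the same combinatorial data and that this indexing is compatible with the block decompositions. First I would reduce to the case $R=k$: the indecomposable $p$-permutation $\mathcal{O}G$-modules, the projective indecomposable objects of $Mack_{\mathcal{O}}(G,1)\cong\mu_{\mathcal{O}}^{1}(G)$-$Mod$, and the block idempotents of $\mathcal{O}G$ and of $\mu_{\mathcal{O}}^{1}(G)$ are all obtained from their counterparts over $k$ by reduction modulo $\mathfrak{p}$ (using that $\mu^{1}$ is $R$-free of $R$-independent rank, Proposition \ref{basis}, together with lifting of idempotents and of $p$-permutation modules), so the statement over $\mathcal{O}$ follows from the one over $k$, with the same block labelling $b\leftrightarrow b^{\mu}$.

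Over $k$, I would invoke the classification of simple Mackey functors of Th\'evenaz--Webb (\cite{tw}): the simple objects of $Mack_{k}(G,1)$ are the functors $S_{Q,V}$, where $Q$ runs over the $p$-subgroups of $G$ up to conjugacy and $V$ over the isomorphism classes of simple $k\overline{N}_{G}(Q)$-modules. Hence the projective indecomposable objects of $Mack_{k}(G,1)$ are the projective covers $P_{Q,V}$, and since $V$ corresponds to the projective indecomposable $k\overline{N}_{G}(Q)$-module with head $V$, these are in bijection with the pairs $(Q,W)$, $Q$ a $p$-subgroup up to conjugacy and $W$ a projective indecomposable $k\overline{N}_{G}(Q)$-module. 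On the other side, by Brou\'e's theory of $p$-permutation (trivial source) modules (\cite{broue_scott}), every indecomposable $p$-permutation $kG$-module $M$ has a $p$-subgroup $Q$ as vertex, its Brauer quotient $M[Q]$ at such a vertex is a projective indecomposable $k\overline{N}_{G}(Q)$-module, and $M\mapsto(Q,M[Q])$ is a bijection onto exactly the same set of pairs. Let $\Phi$ be the resulting bijection, sending $P_{Q,V}$ to the indecomposable $p$-permutation module $f_{G}(Q,W)$ with vertex $Q$ and Brauer quotient $W$, where $W$ is the projective cover of $V$.

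It remains to check that $\Phi$ respects the block decompositions, i.e. that $P_{Q,V}$ lies in $\mu_{k}^{1}(G)b^{\mu}$ if and only if $f_{G}(Q,W)$ lies in $kGb$. A projective indecomposable module lies in the same block as its head, so it suffices to locate the block of $S_{Q,V}$, and for this I would compute the action of $b^{\mu}$ on the evaluation $S_{Q,V}(Q)=V$. Since $S_{Q,V}$ vanishes at every subgroup of order smaller than $|Q|$, the basis elements $t^{Q}_{K}xr^{Q}_{K^{x}}$ of $t^{Q}_{Q}\mu_{k}^{1}(G)t^{Q}_{Q}$ with $K$ a proper subgroup of $Q$ (Proposition \ref{basis}) act by zero on $V$; hence $\mu_{k}^{1}(G)$ acts on $V$ through the ring quotient $t^{Q}_{Q}\mu_{k}^{1}(G)t^{Q}_{Q}/I\cong k\overline{N}_{G}(Q)$ (the kernel $I$ being spanned by those elements with $K\subsetneq Q$), under which $V=S_{Q,V}(Q)$ is the given module. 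Feeding Bouc's explicit formula for $b^{\mu}$ (Theorem 4.5.2 of \cite{bouc_blocks}) into this quotient — its contributions indexed by subgroups not subconjugate to one containing $Q$ die, and the contribution at $Q$ collapses to the Brauer map — one finds that the image of $b^{\mu}$ is $\mathrm{br}_{Q}(b)$, viewed in $k\overline{N}_{G}(Q)$. Therefore $b^{\mu}$ acts as the identity on $S_{Q,V}$ precisely when $V$ belongs to a block $e$ of $k\overline{N}_{G}(Q)$ with $\mathrm{br}_{Q}(b)e=e$.

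Finally, by the same block theory of $p$-permutation modules, $f_{G}(Q,W)$ lies in $kGb$ exactly when its Brauer quotient $W$ lies in a block $e$ of $k\overline{N}_{G}(Q)$ with $\mathrm{br}_{Q}(b)e=e$ — the very same condition, since $W$ has head $V$ and lies in the same block as $V$. Hence $\Phi$ restricts to a bijection between the projective indecomposables of $\mu_{k}^{1}(G)b^{\mu}$ and the indecomposable $p$-permutation modules of $kGb$, which is the claim. I expect the only real obstacle to be the middle step, namely reading off from Bouc's idempotent formula that the image of $b^{\mu}$ in $k\overline{N}_{G}(Q)$ is $\mathrm{br}_{Q}(b)$, so that the block condition on the Mackey side becomes the Brauer-correspondence condition. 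An alternative that sidesteps the explicit formula is to use induction, restriction, inflation and Brauer-quotient functors for Mackey functors to reduce to the case $Q=1$, where $ev_{1}$ sends $P_{1,V}$ to the projective cover of $V$ over $kG$, one has $f_{G}(1,W)=W$, and the statement is immediate from the fact that $b^{\mu}$ acts on the evaluation at $\{1\}$ of any Mackey functor exactly as the block idempotent $b$ does; but then one must still verify that this reduction intertwines the two Brauer correspondences on blocks.
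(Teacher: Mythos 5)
Your main argument has a genuine gap at exactly the point you flag: the claim that, under the ring quotient $t^Q_Q\mu_k^1(G)t^Q_Q/I\cong k\overline{N}_G(Q)$, the image of Bouc's idempotent $b^\mu$ is $\mathrm{br}_Q(b)$. You assert this but do not prove it, and it is not an easy reading off — it requires unwinding the explicit formula of Theorem 4.5.2 of \cite{bouc_blocks} and is essentially equivalent to the statement that the bijection $b\mapsto b^\mu$ respects Brauer correspondence on the level of local structure, which is one of the substantial results of \cite{bouc_blocks}. There are also minor imprecisions in the same step, e.g.\ $\mathrm{br}_Q(b)$ naturally lives in $kC_G(Q)$, not $k\overline{N}_G(Q)$, so an extra argument is needed to interpret it as a sum of blocks of $k\overline{N}_G(Q)$.

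The paper's proof avoids all of this. It works directly with an arbitrary projective indecomposable $P$ (without passing through the pair $(Q,V)$ or through simples): Corollary 12.8 of \cite{tw} already gives the bijection $P\mapsto P(1)$ onto indecomposable $p$-permutation $kG$-modules, \emph{and} the fact that a projective $p$-local Mackey functor is nonzero iff its evaluation at $\{1\}$ is nonzero. Since $b^\mu\cdot P$ is again projective, one gets
\begin{equation*}
b^\mu\cdot P\neq 0 \ \Longleftrightarrow\ (b^\mu\cdot P)(1)\neq 0\ \Longleftrightarrow\ b\cdot P(1)\neq 0,
\end{equation*}
the last step using only the easy observation (equation~(1) in the paper, immediate from the definition of $c_{x,1}$) that $b^\mu$ acts on evaluations at $\{1\}$ exactly as $b$ does. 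No Brauer quotient, no Brauer correspondence, and no Bouc formula beyond its behaviour at $\{1\}$ are needed. Your proposed ``alternative'' is the right direction but still overcomplicates matters: you imagine reducing to $Q=1$ and then worry about intertwining Brauer correspondences, whereas the paper never needs any such reduction — the three-line chain above handles all $(Q,V)$ uniformly. Also note your parametrisation of projectives by pairs $(Q,W)$ with $W$ projective indecomposable over $k\overline{N}_G(Q)$ duplicates, in a roundabout way, exactly what Corollary 12.8 of \cite{tw} gives directly as $P\mapsto P(1)$; citing that result from the outset shortens the argument considerably.
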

\begin{proof}
We use Green's notation for the evaluation of Mackey functors. By Corollary 12.8 of \cite{tw}, we know that the projective indecomposable Mackey functors of $Mack_{k}(G,1)$ are in bijection with the indecomposable $p$-permut\-ation $kG$-modules: if $P$ is an indecomposable projective Mackey functor, then $P(1)$ is an indecomposable $p$-permutation module. Let $Q$ be an other indecomposable projective Mackey functor. Then $P\cong Q$ if and only if $P(1)\cong Q(1)$. The same holds for the projective Mackey functors of $Mack_{\mathcal{O}}(G,1)$. A projective indecomposable Mackey functor $P$ is in the block $b^{\mu}$ if and only if $b^{\mu}\cdot P\neq 0$, but $b^{\mu}\cdot P$ is projective so: 
\begin{align*}
b^{\mu}\cdot P\neq 0 &\Leftrightarrow (b^{\mu}\cdot P)(1)\neq 0 \\
& \Leftrightarrow b\cdot (P(1))\neq 0 \\
&\Leftrightarrow P(1) \hbox{  is in the block $b$ of $RG$.}
\end{align*}
\end{proof}
We will use the following notation: $Mack_{R}(b)$ (resp. $\mu_{R}^{1}(b)$) for the category of Mackey functors which belong to the block $b^{\mu}$ (resp. the algebra $b^{\mu}\mu_{R}^{1}(G)$) for $R= \mathcal{O}$ or $k$. 
\subsection{Brauer construction for Mackey functors and decomposition matrices.}
If $H$ is a subgroup of $G$, then there is an induction functor from $Mack_{R}(H)$ to $Mack_{R}(G)$ denoted by $Ind_{H}^{G}$. There is also a restriction functor from $Mack_{R}(G)$ to $Mack_{R}(H)$ denoted by $Res^{G}_{H}$.
\newline If $N$ is a normal subgroup of $G$, there is an inflation functor $Inf_{G/N}^{G}$ from $Mack_{R}(G/N)$ to $Mack_{R}(G)$. For the definition of these three functors using Green's point of view see Section $4$ and Section $5$ of \cite{tw_simple}. These three functors, using the Dress point of view, arrive from a more general setting, using the fact that a $G$-$H$-biset produces a functor, by `pre-composition', from $Mack_{R}(H)$ to $Mack_{R}(G)$ (see Chapter $8$ \cite{bouc_green} for more details). 
\newline Let $D$ be a finite $G$-set. The Dress construction (see \cite{dress} or $1.2$ \cite{bouc_green}) at $D$ is an endo-functor of the Mackey functors category given by the pre-composition by the cartesian product with $D$: if $M$ is a Mackey functor for $G$ in the sense of Dress, and if $X$ is a finite $G$-set, then the Dress construction of $M$, denoted by $M_{D}$ is:
\begin{equation*}
M_{D}(X)=M(X\times D).
\end{equation*}
Let $R$ be a commutative ring. Let $Q$ be a $p$-subgroup of $G$. The Brauer construction for Mackey functors is a functor $Mack_{R}(G)\to Mack_{R}(\overline{N}_{G}(Q))$ denoted by $M\mapsto M^{Q}$. If $M\in Mack_{R}(G)$, then for $N/Q$ a subgroup of $\overline{N}_{G}(Q)$, 
\begin{equation*}
M^{Q}(N/Q)=M(N)/\sum_{Q\nless R<N} t^{N}_{R}(M(R)).
\end{equation*}
For more details about this functor and for the Mackey functor structure of $M^{Q}$ see Section $5$ \cite{tw_simple}, where $M^{Q}$ is denoted by $M^{+}$. This functor generalizes the Brauer construction for modules since the evaluation at the subgroup $\{1\}$ of $\overline{N}_{G}(Q)$ is 
\begin{equation*}
M^{Q}(Q/Q)=\overline{M}(Q):=M(Q)/\sum_{R<Q}t_{R}^{Q}(M(R)). 
\end{equation*}
Moreover, when $R=k$ is a field and $V$ is a $kG$-module, then $\overline{FP_{V}}(Q)\cong V[Q]$, where $V[Q]$ is the Brauer construction for modules (\cite{broue_scott} Section $1$). 
\begin{lemma}\label{lemme_brauer}
\begin{itemize}
\item[1.] The functor $M\mapsto M^{Q}$ is left adjoint to the functor \newline
$Ind_{N_{G}(Q)}^{G}Inf_{\overline{N}_{G}(Q)}^{N_{G}(Q)}: Mack_{R}(\overline{N}_{G}(Q))\to Mack_{R}(G)$.
\item[2.] The functor $M\mapsto M^{Q}$ sends projective functors to projective functors. 
\item[3.] Let $H$ be a subgroup of $G$. Then $(Ind_{H}^{G}(M))^{Q}=0$ if $Q$ is not conjugate to a subgroup of $H$.
\end{itemize}
Let $(K,\mathcal{O},k)$ be a $p$-modular system, and $R=\mathcal{O}$ or $k$. 
\begin{itemize}
\item[4.] If $M\in Mack_{R}(G,1)$, then $M^{Q}\in Mack_{R}(\overline{N}_{G}(Q),1)$. 
\item[5.] Let $P$ be a projective Mackey functor of $Mack_{k}(G,1)$ and $L$ be a projective functor of $Mack_{\mathcal{O}}(G,1)$ which lifts $P$. Then $L^{Q}$ is a projective Mackey functor of $Mack_{\mathcal{O}}(G,1)$ which lifts $P^{Q}$.
\item[6.] Let $P$ be a projective Mackey functor of $Mack_{k}(G,1)$. Then $P^{Q}(Q/Q)\cong P(1)[Q]$. 
\item[7.] Let $P$ be an indecomposable projective Mackey functor of $Mack_{R}(G,1)$. Then the vertices of $P$ are the maximal $p$-subgroups $Q$ of $G$ such that $P^{Q}\neq 0$.
\end{itemize}
\end{lemma}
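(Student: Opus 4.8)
\textbf{Parts 1--3.} For (1) one notes that restriction does not change the values $M(N)$ for $Q\leqslant N\leqslant N_{G}(Q)$, so that $M\mapsto M^{Q}$ is the composite of $Res^{G}_{N_{G}(Q)}$ with the Brauer quotient functor $Mack_{R}(N_{G}(Q))\to Mack_{R}(\overline{N}_{G}(Q))$ along the normal subgroup $Q\trianglelefteq N_{G}(Q)$. Since induction and restriction of Mackey functors are biadjoint, $Res^{G}_{N_{G}(Q)}$ is left adjoint to $Ind_{N_{G}(Q)}^{G}$, and the Brauer quotient along a normal subgroup is left adjoint to inflation; composing the two adjunctions yields (1). (All of this is in Section~5 of \cite{tw_simple}, where $M^{Q}$ is written $M^{+}$.) Part (2) is then formal: the right adjoint $Ind_{N_{G}(Q)}^{G}\circ Inf_{\overline{N}_{G}(Q)}^{N_{G}(Q)}$ is exact ($Ind$ is given by a finite direct sum formula, and $Inf$ is evaluation of a functor at fixed objects), so its left adjoint $M\mapsto M^{Q}$ preserves projectives. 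For (3) I would use the compatibility of the Brauer construction with induction, a Mackey-type formula (cf.\ Section~5 of \cite{tw_simple}): $(Ind_{H}^{G}M)^{Q}$ is a direct sum, indexed by the double cosets $x$ for which the conjugate $Q^{x}$ lies in $H$, of functors of the form $Ind_{N_{{}^{x}H}(Q)/Q}^{\overline{N}_{G}(Q)}\big(({}^{x}M)^{Q}\big)$; if $Q$ is not conjugate to a subgroup of $H$ the index set is empty, whence $(Ind_{H}^{G}M)^{Q}=0$.

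\textbf{Parts 4--5.} If $M\in Mack_{R}(G,1)$ then $M$ is a direct summand of $\bigoplus_{P}Ind_{P}^{G}Res^{G}_{P}M$, the sum over the $p$-subgroups $P$ of $G$. Applying $(-)^{Q}$ and the formula of (3) with $H=P$ a $p$-group, every term is induced from $N_{{}^{x}P}(Q)/Q$, which is a $p$-group (a quotient of a subgroup of the $p$-group ${}^{x}P$), of a Mackey functor over that $p$-group; since $\mu^{1}_{R}(U)=\mu_{R}(U)$ for a $p$-group $U$, such a functor automatically lies in the relevant $(-,1)$-category, and induction from a $p$-subgroup preserves this. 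Hence $M^{Q}$ is a summand of a direct sum of functors induced from $p$-subgroups of $\overline{N}_{G}(Q)$, so $M^{Q}\in Mack_{R}(\overline{N}_{G}(Q),1)$, proving (4). For (5): $L^{Q}$ is projective by (2) and lies in $Mack_{\mathcal{O}}(\overline{N}_{G}(Q),1)$ by (4); and since each Brauer quotient $M(N)\big/\sum_{Q\nless R<N}t^{N}_{R}(M(R))$ is a cokernel, it commutes with the right exact functor $-\otimes_{\mathcal{O}}k$, so $L^{Q}\otimes_{\mathcal{O}}k\cong(L\otimes_{\mathcal{O}}k)^{Q}=P^{Q}$, i.e.\ $L^{Q}$ lifts $P^{Q}$.

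\textbf{Part 6 (the main obstacle).} Let $\eta_{M}\colon M\to FP_{M(1)}$ be the unit of the adjunction $ev_{1}\dashv FP$; its component at $H$ is $r^{H}_{1}\colon M(H)\to M(1)^{H}$, and it is the identity at $\{1\}$. Being a morphism of Mackey functors, $\eta_{M}$ induces for every $p$-subgroup $Q$ a natural map $\overline{M}(Q)=M^{Q}(Q/Q)\longrightarrow\overline{FP_{M(1)}}(Q)=M(1)[Q]$, and the goal is to show this is an isomorphism when $M$ is projective in $Mack_{k}(G,1)$. The plan is to prove it first for the free Mackey functor $M=\mu^{1}_{k}(G)$ by a direct computation with the basis of Proposition~\ref{basis}: one has $ev_{1}(\mu^{1}_{k}(G))\cong\bigoplus_{L\leqslant G}k[G/L]$, whose Brauer construction at $Q$ is $\bigoplus_{L}k[(G/L)^{Q}]$, while modding $t^{Q}_{Q}\mu^{1}_{k}(G)$ out by the images of the transfers from proper subgroups kills exactly the basis elements $t^{Q}_{K}xr^{L}_{K^{x}}$ with $K<Q$, leaving a basis of $\overline{\mu^{1}_{k}(G)}(Q)$ indexed, compatibly with the $\overline{N}_{G}(Q)$-action, by $\bigsqcup_{L}(G/L)^{Q}$; one checks that $\overline{\eta}(Q)$ identifies these two descriptions. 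The general case follows by additivity: a projective $P$ is a direct summand of a finite direct sum of copies of $\mu^{1}_{k}(G)$, and since $FP$ and $ev_{1}$ are additive and $\eta$ is natural, $\overline{\eta_{P}}(Q)$ is a direct summand of an isomorphism, hence itself one. The technical heart --- and the main obstacle of the whole lemma --- is the bookkeeping of Proposition~\ref{basis} in the free case.

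\textbf{Part 7.} By (2) and (4), $P^{Q}$ is a projective object of $Mack_{k}(\overline{N}_{G}(Q),1)$; by Corollary~12.8 of \cite{tw} an indecomposable projective of a $(-,1)$-category --- hence any nonzero projective --- has nonzero evaluation at the trivial subgroup, so $P^{Q}\neq 0$ if and only if $P^{Q}(Q/Q)\neq 0$, which by (6) means $P(1)[Q]\neq 0$, i.e.\ $Q$ is conjugate to a subgroup of a vertex of the $p$-permutation module $P(1)$. The maximal such $Q$ thus form the conjugacy class of a vertex $U_{0}$ of $P(1)$. It remains to identify $U_{0}$ with a vertex of the Mackey functor $P$. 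Since $P(1)$ is relatively $U_{0}$-projective and $Ind_{U_{0}}^{G}Res^{G}_{U_{0}}P$ is a projective object of $Mack_{k}(G,1)$ with evaluation $Ind_{U_{0}}^{G}Res^{G}_{U_{0}}(P(1))$, the direct-sum-respecting bijection of Corollary~12.8 of \cite{tw} forces $P\mid Ind_{U_{0}}^{G}Res^{G}_{U_{0}}P$, so $P$ is relatively $U_{0}$-projective and every vertex $U$ of $P$ satisfies $U\leqslant_{G}U_{0}$. Conversely $P\mid Ind_{U}^{G}Res^{G}_{U}P$ and (3) give $P^{U_{0}}=0$ unless $U_{0}\leqslant_{G}U$, while $P^{U_{0}}(U_{0}/U_{0})=P(1)[U_{0}]\neq 0$; hence $U$ is $G$-conjugate to $U_{0}$, and the vertices of $P$ are exactly the maximal $p$-subgroups $Q$ with $P^{Q}\neq 0$.
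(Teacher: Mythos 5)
Your proposal is correct and follows the same seven-step organization as the paper, agreeing essentially verbatim on parts 1--4 and 7 (adjunction for 1, formality for 2, Mackey formula for 3, relative projectivity characterisation for 4, and the combination of Corollary 12.8 of \cite{tw}, part 6, and the first assertion of Theorem 3.2 of \cite{broue_scott} for 7). The two places where you genuinely diverge are parts 5 and 6. For part 5 the paper runs a chain of adjunction isomorphisms $Hom(L^Q/\mathfrak{p}L^Q,M)\cong\cdots\cong Hom((L/\mathfrak{p}L)^Q,M)$ to identify the two functors by Yoneda; you instead observe directly that each value $M^Q(N/Q)$ is a levelwise cokernel and that the right-exact functor $-\otimes_{\mathcal{O}}k$ commutes with cokernels. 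This is shorter and arguably more transparent, at the small cost of having to note that the induced Mackey-structure maps on the cokernels also commute with reduction. For part 6 the paper simply cites Lemme 5.10 of \cite{bouc_resolution}; you propose a self-contained computation, reducing to the free module $\mu_k^1(G)$ via naturality of the unit $\eta\colon M\to FP_{M(1)}$ and matching the basis $t^Q_Qxr^L_{Q^x}$ ($Q^x\leqslant L$) against $\bigsqcup_L(G/L)^Q$ through the identification of double cosets $QxL$ with $Q^x\leqslant L$ with fixed points $xL\in (G/L)^Q$. This is a legitimate alternative and makes the lemma self-contained, though the $\overline{N}_G(Q)$-equivariance of the matching of bases should be spelled out if you want a complete argument rather than a sketch.
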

\begin{proof}[Sketch of proof]
\begin{enumerate}
\item Theorem 5.1 of \cite{tw_simple} with a different notation. 
\item Since $M\mapsto M^{Q}$ is left adjoint to an exact functor, it sends projective objects to projective objects. 
\item By successive adjunction: for $L\in Mack_{R}(\overline{N}_{G}(Q))$ and $M\in Mack_{R}(H)$, and using the Mackey formula for Mackey functors, we have:
\begin{align*}
&Hom_{Mack_{R}(\overline{N}_{G}(Q))}((Ind_{H}^{G}M)^{Q},L)\cong Hom_{Mack_{R}(H)}(M,Res^{G}_{H}Ind_{N_{G}(Q)}^{G}Inf_{\overline{N}_{G}(Q)}^{N_{G}(Q)} L)\\
&\cong \bigoplus_{g\in [H\backslash G/N_{G}(Q)]}Hom_{Mack_{R}(H)}\bigg(M,Ind_{H\cap\ ^gN_{G}(Q)}^{H} Iso(g) Res^{N_{G}(Q)}_{N_{G}(Q)\cap H^{g}}Inf_{\overline{N}_{G}(Q)}^{N_{G}(Q)} L\bigg).
\end{align*}
Here we denote by $Iso(g)$ the functor from the category $Mack_{R}(N_{G}(Q)\cap H^{g})$ to the category $Mack_{R}(H\cap\ ^gN_{G}(Q))$ induced by the conjugacy by $g$. The result now follows from the fact that $Res^{N_{G}(Q)}_{N_{G}(Q)\cap H^{g}}Inf_{\overline{N}_{G}(Q)}^{N_{G}(Q)} L=0$ if $Q$ is not a subgroup of $H$.
\item A Mackey functor $M$ is in $Mack_{k}(G,1)$ if and only if there exist a $p$-group $P$ and a Mackey functor $N$ for $P$ such that $M \mid Ind_{P}^{G}(N)$. Moreover, it is not difficult to check that $(Ind_{P}^{G}N)^{Q}$ is a direct sum of Mackey functors induced from subgroups of conjugates of $P$ ($\star$), so $M^{Q}\in Mack_{k}(\overline{N}_{G}(Q),1)$. \newline ($\star$) In order to prove this, one may continue the proof of the point $3$ (or, in a more general setting, look at Lemme $3$ of \cite{bouc_proj}).
\item Let $M$ be a Mackey functor for $\overline{N}_{G}(Q)$. Then using successive adjunctions, we have:
\begin{align*}
Hom_{Mack_{k}(\overline{N}_{G}(Q))}(L^{Q}/\mathfrak{p}(L^{Q}),M)&\cong Hom_{Mack_{k}(\overline{N}_{G}(Q))}(k\otimes_{\mathcal{O}}L^{Q},M)\\
&\cong Hom_{Mack_{\mathcal{O}}(\overline{N}_{G}(Q))}(L^{Q},Hom_{k}(k,M))\\
&\cong Hom_{Mack_{\mathcal{O}}(G)}(L,Ind_{N_{G}(Q)}^{G}Inf_{\overline{N}_{G}(Q)}^{N_{G}(Q)}Hom_{k}(k,M)).
\end{align*}
However, $Ind_{N_{G}(Q)}^{G}Inf_{\overline{N}_{G}(Q)}^{N_{G}(Q)}Hom_{k}(k,M)\cong Hom_{k}(k,Ind_{N_{G}(Q)}^{G}Inf_{\overline{N}_{G}(Q)}^{N_{G}(Q)}(M))$, so
\begin{align*}
Hom_{Mack_{k}(\overline{N}_{G}(Q))}(L^{Q}/\mathfrak{p}(L^{Q}),M)&\cong Hom_{Mack_{\mathcal{O}}(G)}(L,Hom_{k}(k,Ind_{N_{G}(Q)}^{G}Inf_{\overline{N}_{G}(Q)}^{N_{G}(Q)}(M)))\\
&\cong Hom_{Mack_{k}(G)}(L/\mathfrak{p} L, Ind_{N_{G}(Q)}^{G}Inf_{\overline{N}_{G}(Q)}^{N_{G}(Q)}(M))\\
&\cong Hom_{Mack_{k}(\overline{N}_{G}(Q))}((L/\mathfrak{p} L)^Q, M). 
\end{align*}
\item Lemme 5.10 of \cite{bouc_resolution}.
\item This is the first assertion of Theorem $3.2$ of \cite{broue_scott}, using the fact that the vertices of a projective Mackey functor are the vertices of its evaluation at $1$ (Corollary $12.8$ \cite{tw}), and the fact that a $p$-local projective Mackey functor is non zero if and only if its evaluation at $1$ is non zero. 
\end{enumerate}
\end{proof}
Let $F$ be a field and let $G$ be a finite group. By Theorem $8.3$ of \cite{tw_simple}, the set of isomorphism classes of simple Mackey functors for the group $G$ over the field $F$ is in bijection with the set of pairs $(H,V)$ where $H$ runs through a set of representatives of conjugacy classes of subgroups of $G$ and $V$ through a set of isomorphism classes of $F\overline{N}_{G}(H)$-simple modules. We denote by $S_{H,V}$ the simple Mackey functor corresponding to the pair $(H,V)$. Let us recall that:
\begin{equation*}
S_{H,V} = Ind_{N_{G}(H)}^{G} Inf_{\overline{N}_{G}(H)}^{N_{G}(H)} S_{1,V}^{\overline{N}_{G}(H)},
\end{equation*}
where
\begin{equation*}
S_{1,V}^{G}(K)= Im(t_{1}^{K} : V\to V^{K}). 
\end{equation*}
Here $t_{1}^{K}$ is the relative trace map, that is $t_{1}^{K}(v) = \sum_{k\in K} k\cdot v$, for $v\in V$. 
 \newline If $(K,\mathcal{O},k)$ is a $p$-modular system, then a simple Mackey functor $S_{H,V}$ over $k$ is in $Mack_{k}(G,1)$ if and only if $H$ is a $p$-group (see the discussion after Proposition $9.6$ of \cite{tw}). We denote by $P_{H,V}$ the projective cover of $S_{H,V}$. 
\begin{prop}[Decomposition matrix of $\mu_{\mathcal{O}}^{1}(G)$]\label{decomp}
Let $G$ be a finite group, and $(K,\mathcal{O},k)$ be a $p$-modular system which is ``large enough" for the groups $\overline{N}_{G}(L)$ where $L$ runs through the $p$-subgroups of $G$.\newline The decomposition matrix of $\mu_{\mathcal{O}}^{1}(G)$ has rows indexed by the isomorphism classes of indecomposable $p$-permutation $kG$-modules, the columns are indexed by the ordinary irreducible characters of all the $\overline{N}_{G}(L)$ where $L$ runs through the $p$-subgroups of $G$ up to conjugacy.\\ Let $\chi$ be an ordinary character of the group $\overline{N}_{G}(L)$ and let $W$ be an indecomposable $p$-permutation $kG$-module. Then the decomposition number $d_{\chi,W}$ is equal to
\begin{equation*}
d_{\chi,W} = dim_{K} Hom_{K\overline{N}_{L}(Q)}(K\otimes_{\mathcal{O}}\widehat{W[L]},K_\chi),
\end{equation*} 
where $\widehat{W[L]}$ is the (unique) $p$-permutation $\mathcal{O}G$-module which lifts $W[L]$ and $K_{\chi}$ is the simple $KG$-module afforded by the character $\chi$. 
\end{prop}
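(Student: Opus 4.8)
The plan is to compute the Cartan-like pairing between a projective indecomposable Mackey functor and a simple Mackey functor after extending scalars to $K$, and then to identify each of the two pieces of that computation with the data appearing in the statement. Recall from the discussion preceding the proposition that over $\mathcal{O}$ the projective indecomposable Mackey functors $L_W$ in $Mack_{\mathcal{O}}(G,1)$ are indexed by the indecomposable $p$-permutation $kG$-modules $W$ (via $L_W(1)=\widehat{W}$, the lift of $W$), while after inverting $p$ the category $K\otimes_{\mathcal{O}}\mu_{\mathcal{O}}^1(G)$ is semisimple and its simple modules are the $KS_{L,\chi}$ for $L$ a $p$-subgroup up to conjugacy and $\chi\in\mathrm{Irr}(\overline{N}_G(L))$. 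So the decomposition number $d_{\chi,W}$ is by definition the multiplicity of $KS_{L,\chi}$ in $K\otimes_{\mathcal{O}}L_W$, which by semisimplicity equals
\begin{equation*}
d_{\chi,W}=\dim_K \mathrm{Hom}_{K\otimes\mu^1_{\mathcal{O}}(G)}\bigl(K\otimes_{\mathcal{O}}L_W,\; KS_{L,\chi}\bigr).
\end{equation*}

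First I would rewrite this Hom-space using the adjunctions available for the Brauer construction. Since $S_{L,\chi}=Ind_{N_G(L)}^G Inf_{\overline{N}_G(L)}^{N_G(L)} S_{1,\chi}^{\overline{N}_G(L)}$, and by Lemma~\ref{lemme_brauer}(1) the functor $M\mapsto M^L$ is left adjoint to $Ind_{N_G(L)}^G Inf_{\overline{N}_G(L)}^{N_G(L)}$, I get after extending scalars a natural isomorphism
\begin{equation*}
\mathrm{Hom}(K\otimes L_W, KS_{L,\chi})\;\cong\;\mathrm{Hom}_{K\mu^1(\overline{N}_G(L))}\bigl((K\otimes L_W)^L,\; K\otimes S_{1,\chi}^{\overline{N}_G(L)}\bigr).
\end{equation*}
Now $(K\otimes L_W)^L\cong K\otimes (L_W^L)$, and by Lemma~\ref{lemme_brauer}(5) the functor $L_W^L$ is a projective Mackey functor in $Mack_{\mathcal{O}}(\overline{N}_G(L),1)$ lifting $P^L$ where $P$ is the projective cover of $W$ in $Mack_k(G,1)$; by Lemma~\ref{lemme_brauer}(6) its evaluation at $1$ is the lift $\widehat{W[L]}$ of the Brauer quotient $W[L]$, which is a $p$-permutation $k\overline{N}_G(L)$-module. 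The hard part of the argument will be the second step: passing from a Hom-space of Mackey functors for $\overline{N}_G(L)$ (evaluated in the semisimple, characteristic-zero setting) to the stated Hom-space of $K\overline{N}_L(Q)$-modules. Here one uses that over $K$ the relevant simple Mackey functor $S_{1,\chi}$ for a group $H$ has the property that $\mathrm{Hom}_{K\mu(H)}(P_V, S_{1,\chi})$ — for $P_V$ the projective Mackey functor with $P_V(1)=V$ a $KH$-module — is computed by the multiplicity of $K_\chi$ in $V$, i.e. equals $\dim_K\mathrm{Hom}_{KH}(V,K_\chi)$; this comes from the adjunction $ev_1\dashv FP_{-}$ together with the fact that in characteristic zero $S_{1,\chi}$ is a direct summand of $FP_{K_\chi}$ and conversely every projective Mackey functor is (in the semisimple setting) determined by its evaluation at $1$. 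Applying this with $H=\overline{N}_G(L)$, $V=K\otimes_{\mathcal{O}}\widehat{W[L]}$ yields exactly
\begin{equation*}
d_{\chi,W}=\dim_K\mathrm{Hom}_{K\overline{N}_G(L)}\bigl(K\otimes_{\mathcal{O}}\widehat{W[L]},\,K_\chi\bigr),
\end{equation*}
and since $K_\chi$ is the simple $KG$-module afforded by $\chi$ (the notation $\overline{N}_L(Q)$ in the statement being a typo for $\overline{N}_G(L)$), this is the claimed formula.

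To make the middle step fully rigorous I would argue as follows. Over $K$ the $p$-local Mackey algebra is semisimple (its simple modules are the $KS_{H,V}$ with $H$ a $p$-subgroup), so a projective Mackey functor is just a sum of simples and all Hom-spaces are computed by multiplicities. The key input is then the classical identification, for a finite group $H$ in characteristic zero, of $\mathrm{Hom}_{K\mu(H)}(FP_V, S_{1,\chi})$: using $S_{1,\chi}^H(K)=\mathrm{Im}(t_1^K: K_\chi\to K_\chi^K)$ one sees $S_{1,\chi}$ is the "constant part" of $FP_{K_\chi}$, and Yoneda/adjunction gives $\mathrm{Hom}(FP_V,S_{1,\chi})\cong\mathrm{Hom}_{KH}(V,K_\chi)$. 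Combining this with the first step and with Lemma~\ref{lemme_brauer}(6) completes the proof. I expect the only genuine obstacle is bookkeeping: checking that the scalar extension commutes with the Brauer construction and with the projective-cover/lifting operations (all provided by Lemma~\ref{lemme_brauer}), and confirming that "large enough" guarantees every $K\overline{N}_G(L)$ splits so that the characters $\chi$ really do index the simple modules — both of which are routine given the hypotheses.
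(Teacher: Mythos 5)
Your proof is correct and follows essentially the same route as the paper's: the decomposition number is rewritten as a $\mathrm{Hom}$-space over $K$, reduced via the two adjunctions $ev_1 \dashv FP_{-}$ and $(-)^L \dashv Ind_{N_G(L)}^G Inf_{\overline{N}_G(L)}^{N_G(L)}$, and evaluated using Lemma~\ref{lemme_brauer}, which is precisely the paper's chain of isomorphisms. The only spot where you could be more precise is the identification of $S_{1,\chi}$ with $FP_{K_\chi}$ over $K$ — it is an isomorphism, not merely a direct summand relation — and the paper handles this cleanly by citing Lemma~9.2 of \cite{tw_simple} for $S_{L,K_\chi}\cong Ind_{N_G(L)}^G Inf_{\overline{N}_G(L)}^{N_G(L)}FP_{K_\chi}$.
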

\begin{proof}
Since $(K,\mathcal{O},k)$ is a splitting system for $\mu_{\mathcal{O}}^{1}(G)$, and since $\mu_{K}^{1}(G)$ is a semi-simple algebra, the Cartan matrix is symmetric (Proposition $1.9.6$ \cite{benson}). The rows of the decomposition matrix are indexed by the projective $\mu_{k}^{1}(G)$-modules, and the columns are indexed by the simples $\mu_{K}^{1}(G)$-modules. Instead of working with the $p$-local Mackey algebras, we use the Mackey functors point of view. Since the indecomposable projective Mackey functors of $Mack_{k}(G,1)$ are in bijection with the indecomposable $p$-permutation $kG$-modules, we can index the rows of this matrix by the (isomorphism classes of) indecomposable $p$-permutation modules. The bijection send a $p$-local projective Mackey functor $P$ to its evaluation $P(1)$ (with Green's notation for the evaluation). Moreover, the (isomorphism classes of) simple Mackey functors of $Mack_{K}(G,1)$ are in bijection with the pairs $(L,V)$, where $L$ runs through the conjugacy classes of $p$-subgroups of $G$ and $V$ runs through the isomorphism classes of simple $KN_{G}(L)/L$-modules. So we can indexed the columns of the matrix by the set of ordinary characters of $N_{G}(L)/L$ when $L$ runs through the conjugacy classes of $p$-subgroups of $G$.
\newline Let $L$ be a $p$-subgroup of $G$ and $\chi$ be an ordinary character of $K\overline{N}_{G}(L)$. The corresponding simple Mackey functor for $G$ over $K$, denoted by $S_{L,K_\chi}$ is isomorphic to $Ind_{N_{G}(L)}^{G}Inf_{\overline{N}_{G}(L)}^{N_{G}(L)}FP_{K_\chi}$ (see Lemma $9.2$ \cite{tw_simple}). Here, $K_{\chi}$ is the simple $K\overline{N}_{G}(L)$-module afforded by the character $\chi$. 
\newline Let $H$ be a $p$-subgroup of $G$ and $V$ be a simple $k\overline{N}_{G}(H)$-module. We denote by $P_{H,V}$ the corresponding projective indecomposable Mackey functor for $G$ over $k$, and by $\widehat{P_{H,V}}$ the indecomposable Mackey functor for $G$ over $\mathcal{O}$ which lifts $P_{H,V}$. 
\newline We denote by $M$ a Mackey functor for $G$ over $\mathcal{O}$ such that $M$ is $\mathcal{O}$-free and $$K\otimes_{\mathcal{O}} M \cong S_{L,K_{\chi}}.$$ Then, the decomposition number indexed by $S_{L,K_{\chi}}$ and $P_{H,V}$ is:
\begin{align*}
d_{S_{L,K_\chi},P_{H,V}}& = dim_{k} Hom_{Mack_{k}(G,1)}(P_{H,V},k\otimes_{\mathcal{O}} M) \\
&= rank_{\mathcal{O}} Hom_{Mack_{\mathcal{O}}(G,1)}(\widehat{P_{H,V}},M)\\
&= dim_{K} Hom_{Mack_{K}(G,1)}(K\otimes_{\mathcal{O}} \widehat{P_{H,V}}, S_{L,K_\chi})\\
&= dim_{K} Hom_{Mack_{K}(G,1)}(K\otimes_{\mathcal{O}} \widehat{P_{H,V}}, Ind_{N_{G}(L)}^{G}Inf_{\overline{N}_{G}(L)}^{N_{G}(L)}FP_{K_\chi})\\
&=dim_{K}Hom_{Mack_{K}(N_{G}(L)/L,1)}(\big(K\otimes_{\mathcal{O}} \widehat{P_{H,V}}\big)^{L},FP_{K_\chi})\\
&= dim_{K} Hom_{KN_{G}(L)/L}(\big(K\otimes_{\mathcal{O}} \widehat{P_{H,V}}\big)^{L}(L/L),K_\chi).
\end{align*}
The two last equalities come from the fact that the two following pairs are pairs of adjoint functors: $ev_{1} \dashv FP_{-}$ and  $(-)^{L} \dashv ind_{N_{G}(L)}^{G}Inf_{\overline{N}_{G}(L)}^{N_{G}(L)}$. 
\newline\indent Moreover, we have $(K\otimes_{\mathcal{O}} \widehat{P_{H,V}})^{L}(L/L)\cong K\otimes_{\mathcal{O}} ((\widehat{P_{H,V}})^{L}(L/L))$. By Lemma \ref{lemme_brauer}, the $\mathcal{O}\overline{N}_{G}(L)$-module $(\widehat{P_{H,V}})^{L}(L/L)$ is the unique (up to isomorphism) $p$-permutation module which lifts $(P_{H,V})^{L}(L/L)\cong P_{H,V}(1)[L]$, so:
\begin{equation*}
d_{S_{L,K_\chi},P_{H,V}} = dim_{K} Hom_{K\overline{N}_{L}(Q)}(K\otimes_{\mathcal{O}}\widehat{W[L]},K_\chi),
\end{equation*}
where $W$ is the indecomposable $p$-permutation $kG$-module $P_{H,V}(1)$. 
\end{proof}
\begin{re}
By Section 4.4 of \cite{bouc_cartan}, the sub-matrix indexed by the ordinary characters of $G$, and the (isomorphism classes of) indecomposable $p$-permutation $kG$-modules is the decomposition matrix of the cohomological Mackey algebra $co\mu_{\mathcal{O}}(G)$. The sub-matrix indexed by the ordinary characters of $G$ and the isomorphism classes of indecomposable projective $kG$-modules is the decomposition matrix of $\mathcal{O}G$. 
\end{re}
\section{Equivalences between blocks of $p$-local Mackey algebras.}
In this section we give some examples of equivalences between blocks of $p$-local Mackey algebras.  We look at the case of $p$-nilpotent groups. We prove that the $p$-local algebra of the principal block of such a group is Morita equivalent to the Mackey algebra of its Sylow $p$-subgroup. But as in \cite{splendid} the case of the non-principal block seems unexpectedly much more difficult. We give an example which proves that in general there is \emph{no} Morita equivalence between the $p$-local Mackey algebra of the block and the Mackey algebra of the defect. Then we look at the case of a finite group with Sylow $p$-subgroups of order $p$. In this case the $p$-local Mackey algebras are symmetric algebras, they are more precisely Brauer tree algebras, so we can use the tools which were developed for Brou\'e's conjecture for blocks with cyclic Sylow $p$-subgroups. 
\subsection{Basic results.}
Since the $p$-local Mackey algebra and the cohomological Mackey algebra share a lot of properties, for example, they have the same number of simple modules in each block and the projective cohomological Mackey functors are the biggest  cohomological quotients of the $p$-local projective Mackey functors, one may ask if an equivalence between blocks of $p$-local Mackey algebras induces in some sense, an equivalence between the corresponding blocks of the cohomological Mackey algebras. Such a result would allow us to use \cite{equiv_coho}. The following example shows that the situation is unfortunately not that simple.
\begin{ex}
Let $R$ be a commutative ring and $G=C_2$ be the (cyclic) group of order $2$. Then a basis of $\mu_{R}(C_2)$ is given by: $t_{C_2}^{C_2}$, $t_{1}^{C_2}r^{C_2}_{1}$, $t_{1}^{C_2}$, $r^{C_2}_{1}$, $t_{1}^{1}$ and $t^{1}_{1}x$ where $x\in C_2$ and $t_{1}^{1}x$ means $t^{1}_{1}c_{1,x}$. Then, there is an automorphism $\phi$ of $\mu_{k}(C_2)$ where $\phi$ is defined on the basis elements by: $\phi(t_{C_2}^{C_2})=t^{1}_{1}$, $\phi(t_{1}^{C_2}r^{C_2}_{1})=t^{1}_{1}+t^{1}_{1}x$, $\phi(t_{1}^{C_2})=r^{C_2}_{1}$, $\phi(r^{C_2}_{1})=t_{1}^{C_2}$, $\phi(t_{1}^{1})=t^{C_2}_{C_2}$ and $\phi(t^{1}_{1}x)=t^{C_2}_{1}r^{C_2}_{1}-t^{C_2}_{C_2}$. This gives an unitary automorphism of $\mu_{R}(C_2)$. By general results of Morita theory, the bimodule $\ _1\mu_{R}(C_2)_{\phi}$ induces a Morita self-equivalence of $\mu_{R}(C_2)$. Using the usual equivalence of categories between $\mu_{R}(C_2)$-$Mod$ and $Mack_{R}(C_2)$ (in the sense of Green), it is not hard to check that this Morita self-equivalence induces an self-equivalence $F$ of $Mack_{R}(C_{2})$ which has the following property. Let $M$ be a Mackey functor for $C_{2}$. Then $F(M)(1) = M(C_{2})$ and $F(M)(C_{2}) = M(1)$. The maps $t_{1}^{C_{2}}$ and $r^{C_{2}}_{1}$ are exchanged. It is now easy to check that this functor $F$ does not preserve the cohomological structure. 
 \newline If $R=k$ is a field of characteristic $2$, then the functor $F$ exchanges the simples $S_{1,k}$ and $S_{C_2,k}$, so it exchanges the indecomposable projective Mackey functors $P_{1,k}$ and $P_{C_{2},k}$. Moreover $P_{1,k}$ is in the full subcategory of $Mack_{R}(G)$ equivalent to $kC_{2}$-$Mod$ and $P_{C_{2},k}$ is not. So we have that $F$ does not restrict to a self-equivalence of he full subcategory of cohomological Mackey functors or of the full subcategory equivalent to $kC_2$-$Mod$.
 \begin{equation*}
 \xymatrix{
 kC_{2}\hbox{-}Mod\ar@{^{(}->}[r]\ar[d]|{\times} & Comack_{k}(C_{2})\ar@{^{(}->}[r]\ar[d]|{\times} & Mack_{k}(C_{2})\ar[d]^{F}\\
 kC_{2}\hbox{-}Mod\ar@{^{(}->}[r] & Comack_{k}(C_{2})\ar@{^{(}->}[r] & Mack_{k}(C_{2})
 }
 \end{equation*}
 \end{ex}
 This example proves the following Proposition:
 \begin{prop}
 Let $G$ and $H$ be two finite groups, and $k$ be a field of characteristic $p>~0$. An equivalence between $Mack_{k}(G,1)$ and $Mack_{k}(H,1)$ \emph{does not} have to send cohomological Mackey functors to cohomological Mackey functors and \emph{does not} have to respect the vertices of the projective indecomposable Mackey functors. In particular it does not have to induce an equivalence between the full-subcategories equivalent to $kG$-$Mod$ and $kH$-$Mod$. 
 \end{prop}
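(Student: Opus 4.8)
The plan is to observe that the self-equivalence $F$ of $Mack_{k}(C_{2})$ built in the preceding Example already witnesses all three failures, so that the proof reduces to extracting these consequences. Take $p=2$ and $G=H=C_{2}$. Every subgroup of $C_{2}$ is a $p$-subgroup, hence $\mu_{k}^{1}(C_{2})=\mu_{k}(C_{2})$ and $Mack_{k}(C_{2},1)=Mack_{k}(C_{2})$, so $F$ is in particular a self-equivalence of $Mack_{k}(C_{2},1)$. Recall from the Example that $F$ interchanges the simple Mackey functors $S_{1,k}$ and $S_{C_{2},k}$, hence interchanges their projective covers $P_{1,k}$ and $P_{C_{2},k}$.

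Next I would make $P_{1,k}$ and $P_{C_{2},k}$ explicit. Using the six-element basis of $\mu_{k}(C_{2})$ from Proposition \ref{basis} and the defining relations, one checks that $t^{1}_{1}$ and $t^{C_{2}}_{C_{2}}$ are primitive idempotents summing to $1$, that $P_{1,k}\cong\mu_{k}(C_{2})\,t^{1}_{1}$ with $P_{1,k}(1)\cong kC_{2}$ the free module, and that $P_{C_{2},k}\cong\mu_{k}(C_{2})\,t^{C_{2}}_{C_{2}}$ with $P_{C_{2},k}(1)\cong k$ the trivial module, while $t^{C_{2}}_{1}r^{C_{2}}_{1}$ acts as a non-zero nilpotent operator on $P_{C_{2},k}(C_{2})$. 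Thus $P_{1,k}$ is the fixed point functor $FP_{kC_{2}}$ — it lies in the subcategory of $Mack_{k}(C_{2})$ equivalent to $kC_{2}$-$Mod$ and is in particular cohomological — whereas $P_{C_{2},k}$ is not cohomological, because $t^{C_{2}}_{1}r^{C_{2}}_{1}$ does not act as $[C_{2}:1]\cdot\mathrm{id}=0$ on it, and therefore is not a fixed point functor either. Since $F(P_{1,k})=P_{C_{2},k}$, this shows at once that $F$ sends a cohomological (indeed fixed point) functor to one that is neither, giving the first assertion and, because $P_{1,k}$ lies in the subcategory equivalent to $kC_{2}$-$Mod$ while $P_{C_{2},k}$ does not, the "in particular" about the subcategories equivalent to $kG$-$Mod$.

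For the vertices I would invoke Lemma \ref{lemme_brauer}: the vertex of an indecomposable projective functor $P$ of $Mack_{R}(G,1)$ is the largest $p$-subgroup $Q$ with $P^{Q}\neq 0$, and $P^{Q}(Q/Q)\cong P(1)[Q]$. Since $(kC_{2})[C_{2}]=0$ one gets $P_{1,k}^{C_{2}}=0$, so $P_{1,k}$ has vertex $\{1\}$; since $k[C_{2}]=k\neq 0$, the functor $P_{C_{2},k}$ has vertex $C_{2}$. As $F(P_{1,k})=P_{C_{2},k}$, this gives the remaining assertion that $F$ does not respect vertices. The only steps requiring actual computation — and both are short — are the identification of the two indecomposable projective Mackey functors (the same kind of computation with the multiplication table of $\mu_{k}(C_{2})$ that already underlies the construction of $\phi$ in the Example) and the check that $P_{C_{2},k}$ fails the cohomological relation $t^{K}_{H}r^{K}_{H}=[K:H]\,\mathrm{id}$; everything else is formal, using only the properties of $F$ recorded in the Example and Lemma \ref{lemme_brauer}.
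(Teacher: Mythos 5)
Your proof is correct and takes the same route as the paper: the Proposition in the paper is justified precisely by the $C_{2}$ Example that precedes it, and you are extracting the consequences of the self-equivalence $F$ built there. Your identifications of $P_{1,k}\cong\mu_{k}(C_{2})t^{1}_{1}$ (with $P_{1,k}(1)\cong kC_{2}$, so $P_{1,k}=FP_{kC_{2}}$ lies in the subcategory equivalent to $kC_{2}$-$Mod$) and $P_{C_{2},k}\cong\mu_{k}(C_{2})t^{C_{2}}_{C_{2}}$ (with $P_{C_{2},k}(1)\cong k$ and $t^{C_{2}}_{1}r^{C_{2}}_{1}$ acting nontrivially, so the cohomological relation $t^{K}_{H}r^{K}_{H}=[K:H]\cdot\mathrm{id}$ fails) are all correct and match what the paper asserts without computation. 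The one place where you go beyond the paper's text is the vertex assertion: the Example never says anything explicit about vertices, whereas the Proposition does claim $F$ need not respect them, and your argument via Lemma \ref{lemme_brauer} (using $P^{Q}(Q/Q)\cong P(1)[Q]$, with $(kC_{2})[C_{2}]=0$ and $k[C_{2}]=k\neq0$) cleanly fills that gap. One could alternatively just cite Corollary 12.8 of \cite{tw} to say the vertex of $P$ is the vertex of $P(1)$ and then observe that $kC_{2}$ is projective while $k$ has vertex $C_{2}$, but the route through Lemma \ref{lemme_brauer} is equivalent and equally short. In short: same strategy as the paper, with the vertex part usefully made explicit.
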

 \begin{re}
 This Proposition does not state that there exist finite groups $G$ and $H$ and blocks $b$ and $c$ of $RG$ and $RH$ such that $Mack_{R}(b)$ and $Mack_{R}(c)$ are equivalent and the category $RGb$-$Mod$ and $RHc$-$Mod$ are not equivalent. We do not have any results in this direction. This Proposition state that we have to be careful when we choose an equivalence between categories of Mackey functors if we want some results for the full subcategories of cohomological Mackey functors or of modules over the group algebra. 
 \end{re}
 \subsection{Principal block of $p$-nilpotent groups.}
\begin{lemma}\label{fitting}
Let $M$ and $M'$ be two projective Mackey functors of $Mack_{k}(G,1)$, and $f: M\to M'$ be a morphism. The morphism $f$ is an isomorphism if and only if the morphism $f(1): M(1)\to M'(1)$ is an isomorphism of $kG$-modules. 
\end{lemma}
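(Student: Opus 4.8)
The plan is to use the fact (Corollary 12.8 of \cite{tw}, recalled before Proposition \ref{decomp}) that for a projective Mackey functor $P$ of $Mack_{k}(G,1)$ the evaluation $P \mapsto P(1)$ is a faithful functor onto $p$-permutation $kG$-modules, together with a standard Fitting-lemma argument. First I would observe that one direction is trivial: if $f$ is an isomorphism of Mackey functors then $f(1)$ is certainly an isomorphism of $kG$-modules, since evaluation at $1$ is a functor. So the content is the converse.

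For the converse, I would first reduce to the case $M = M'$ and $f$ an endomorphism. This is the usual trick: if $f(1)$ is an isomorphism, then in particular $M(1) \cong M'(1)$ as $kG$-modules; since $M$ and $M'$ are projective and $p$-local, this forces $M \cong M'$ (by the bijection between indecomposable projectives of $Mack_{k}(G,1)$ and indecomposable $p$-permutation modules — decompose into indecomposables and match them up). Fix such an isomorphism $g : M' \to M$; then it suffices to show that $g \circ f : M \to M$ is an isomorphism, because $(g\circ f)(1) = g(1)\circ f(1)$ is an isomorphism of $kG$-modules. So we are reduced to showing: an endomorphism $h$ of a projective functor $M \in Mack_{k}(G,1)$ with $h(1)$ invertible is invertible.

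Now I would invoke the Fitting lemma in the category $Mack_{k}(G,1)$, which is the module category of the finite-dimensional $k$-algebra $\mu_{k}^{1}(G)$, hence a finite-length abelian category with local endomorphism rings on indecomposables. Decompose $M = \bigoplus_{i} M_{i}^{n_i}$ into indecomposable projectives. The endomorphism ring $\mathrm{End}(M)$ is then a finite-dimensional $k$-algebra, and $h$ is invertible if and only if it is not in the Jacobson radical, equivalently if and only if its image in $\mathrm{End}(M)/\mathrm{rad}$ is invertible, equivalently if and only if $h$ induces an isomorphism on the semisimple quotient $M/\mathrm{rad}(M)$ — here I use that $M$ is projective, so a map inducing an iso on tops is surjective by Nakayama and then an iso by length count. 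Finally, I would relate $M/\mathrm{rad}\, M$ to $M(1)$: the simple Mackey functors in $Mack_{k}(G,1)$ of the form $S_{1,V}$ are exactly those whose evaluation at $1$ is nonzero, and for an indecomposable projective $M_i$ with vertex $1$ (i.e. $M_i(1)$ projective indecomposable), $M_i/\mathrm{rad}\,M_i = S_{1, V_i}$ with $M_i(1)$ the projective cover of $V_i$; for $M_i$ with larger vertex, $M_i(1)$ is a non-projective $p$-permutation module but its head still detects $M_i/\mathrm{rad}\,M_i$ after evaluation. The cleanest route is: evaluation at $1$ sends $\mathrm{rad}(M)$ into $\mathrm{rad}(M(1))$ and the induced map $M/\mathrm{rad}\,M \to M(1)/\mathrm{rad}\,M(1)$ is injective on the relevant isotypic components, so $h(1)$ invertible forces $h$ invertible on tops, hence $h$ invertible.

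The main obstacle I expect is the last step — carefully relating the radical of the Mackey functor $M$ to the radical of the $kG$-module $M(1)$, i.e. checking that the functor $P \mapsto P(1)$, while not full or essentially surjective on all of $Mack_k(G,1)$, is faithful enough on projectives and reflects isomorphisms. One clean way around this is to avoid radicals entirely and argue directly: write $h^{n}$ for $n$ large so that $M = \mathrm{im}(h^{n}) \oplus \ker(h^{n})$ (Fitting decomposition in a length category); evaluating at $1$ gives $M(1) = \mathrm{im}(h^n(1)) \oplus \ker(h^n(1))$, but $h(1)$ is an isomorphism so $h^n(1)$ is too, forcing $\ker(h^n(1)) = 0$, hence $(\ker h^n)(1) = 0$, and since a projective Mackey functor in $Mack_k(G,1)$ is zero iff its evaluation at $1$ is zero (a direct summand of a projective is projective, and this detection is exactly Corollary 12.8 of \cite{tw} / point used in Lemma \ref{lemme_brauer}(7)), we get $\ker h^n = 0$, so $h^n$ and therefore $h$ is an isomorphism. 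This Fitting-decomposition version sidesteps the radical bookkeeping and is the argument I would write up.
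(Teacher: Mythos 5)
The paper does not give its own proof of this lemma; it simply cites Lemme~6.2 of \cite{bouc_resolution}, so your self-contained argument is a genuine alternative. The final version of your argument, via the Fitting decomposition, is correct and clean: you reduce to an endomorphism $h$ of $M$ with $h(1)$ invertible by using Corollary~12.8 of \cite{tw} together with Krull--Schmidt to produce an isomorphism $M\cong M'$; then for $n$ large, $\ker(h^n)$ is a direct summand of $M$ and hence a projective object of $Mack_{k}(G,1)$, exactness of evaluation at $1$ gives $(\ker h^n)(1)=\ker\bigl(h^n(1)\bigr)=0$, and the fact that a $p$-local projective Mackey functor is zero exactly when its evaluation at $1$ vanishes forces $\ker h^n=0$, so $h^n$ and therefore $h$ is an isomorphism. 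All the inputs you invoke are facts the paper itself uses elsewhere. One caveat worth flagging: the earlier radical-based sketch is not really salvageable as written, and the problem is worse than ``bookkeeping.'' For any indecomposable projective $P_{H,V}$ with $H\neq 1$ a $p$-group, the top $S_{H,V}$ satisfies $S_{H,V}(1)=0$, so $P_{H,V}(1)=(\mathrm{rad}\,P_{H,V})(1)$ and the induced map on heads $M/\mathrm{rad}\,M \to M(1)/\mathrm{rad}\,M(1)$ kills precisely those components; there is no injectivity ``on the relevant isotypic components'' in the sense you would need. You anticipate this difficulty and rightly discard the approach, so the write-up should present only the Fitting argument and drop the radical discussion.
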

\begin{proof}
Lemme $6.2$ in \cite{bouc_resolution}. 
\end{proof}
\begin{theo}\label{nil}
Let $G=N \rtimes P$ be a $p$-nilpotent group, where $P$ is a Sylow $p$-subgroup of $G$. Let $b_{0}$ be the principal block of $kG$. Then $Mack_{k}(b_{0})$ is equivalent to $Mack_{k}(P)$.
\end{theo}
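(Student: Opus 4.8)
The plan is to exhibit a Morita equivalence directly by constructing the right bimodule, using the structure $G = N \rtimes P$ with $N$ a $p'$-group. The key observation is that a $p$-subgroup $Q$ of $G$ is always conjugate into $P$, and in fact the conjugation action identifies the relevant local data: for $Q \leqslant P$, the normalizer $N_G(Q)$ has a Sylow $p$-subgroup contained in the normalizer structure coming from $P$, and the $p'$-part $N_N(Q)$ is handled by Glauberman-type considerations. More concretely, since $\mu_k^1(G)$ has as a basis the elements $t^H_K x r^L_{K^x}$ where now $K$ runs over $p$-subgroups (Proposition \ref{basis}), and since the principal block $b_0$ of $kG$ is Morita equivalent to $kP$ by Puig's theorem for $p$-nilpotent groups (indeed $kGb_0 \cong kP$ as $P$ acts trivially enough on the relevant summand), I would try to promote this module-level equivalence to the Mackey level.

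First I would set up the candidate functor. The natural map is restriction/truncation: given a Mackey functor $M$ for $G$ in $Mack_k(G,1)$, its values $M(Q)$ for $p$-subgroups $Q \leqslant P$, together with the transfers, restrictions and conjugations among them, form (almost) a Mackey functor for $P$ — the subtlety is that $c_{g,Q}$ for $g \in G$ with ${}^gQ \leqslant P$ is extra structure not seen by $P$. So the correct functor should incorporate the $N$-action. I would define $F : Mack_k(b_0) \to Mack_k(P)$ and a candidate quasi-inverse $Ind$-type functor, and then the heart of the argument is to check that the unit and counit are isomorphisms. Here Lemma \ref{fitting} is the crucial tool: since on both sides the objects of interest are built from projectives, and a map of projective Mackey functors in $Mack_k(-,1)$ is an isomorphism iff its evaluation at $1$ is, I only need to verify the equivalence after evaluating at the trivial subgroup — where it reduces exactly to the known Morita equivalence $kGb_0 \simeq kP$.

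More precisely, the cleanest route is probably via the Burnside/biset description (Proposition \ref{burnside_alg}): $\mu_k^1(G) \cong$ the appropriate $p$-local truncation of $kB(\Omega_G^2)$, and one constructs an explicit $(\mu_k^1(G)b_0^\mu, \mu_k^1(P))$-bimodule out of the $G$-$P$-biset $G$ (or rather its $p$-local part), analogous to how $kGb_0$ becomes a $kG$-$kP$-bimodule realizing Puig's equivalence. Then I would compare $b_0^\mu \mu_k^1(G) \otimes_{\mu_k^1(P)} -$ with $F$, show the counit and unit maps are maps of projective functors, evaluate at $1$, and invoke that $P_{H,V} \mapsto P_{H,V}(1)$ sets up the bijection between indecomposable projectives and indecomposable $p$-permutation modules (the Proposition after Theorem \ref{block}). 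Since $G$ is $p$-nilpotent, the indecomposable $p$-permutation modules in $b_0$ all have the form $\mathrm{Ind}_P^G$ (something) restricted appropriately, matching the $p$-permutation modules of $kP$ bijectively and compatibly with Brauer quotients, which by Lemma \ref{lemme_brauer}(6)(7) forces the vertices and the whole local structure to match.

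The main obstacle I anticipate is bookkeeping the $N$-conjugation maps $c_{g,Q}$: one must check that the functor $F$ genuinely lands in $Mack_k(P)$ and is full and faithful, i.e. that no information is lost in passing from $G$-conjugation to $P$-conjugation on the $p$-local part. This amounts to showing that for a $p$-subgroup $Q \leqslant P$, the group $\overline{N}_G(Q)$ is, up to the relevant $p'$-Glauberman correspondence, controlled by $\overline{N}_P(Q)$ together with $N_N(Q)$ acting trivially on the block — equivalently that $N_N(Q)$ contributes nothing new to $b_0^\mu \mu_k^1(G)$. Establishing this control-of-fusion statement at the level of the Mackey algebra, and checking that the bimodule is projective on both sides (so that $\otimes$ is well-behaved), is where the real work lies; once that is in hand, Lemma \ref{fitting} reduces everything to the classical $p$-nilpotent Morita equivalence and the rest is formal.
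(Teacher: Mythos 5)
You have correctly identified the two central ingredients used in the paper: the adjoint pair $Res^{G}_{P} \dashv b_{0}^{\mu}Ind_{P}^{G}$ (and $b_{0}^{\mu}Ind_{P}^{G} \dashv Res^{G}_{P}$) between $Mack_{k}(b_{0})$ and $Mack_{k}(P)$, and Lemma \ref{fitting} as the device that reduces everything to the evaluation at $\{1\}$, where the known isomorphism $kGb_{0}\cong kP$ takes over. That is precisely the paper's strategy. Where you diverge is in what you think still needs to be done afterwards, and there you have created obstacles that do not exist.

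The functor $F$ you describe is nothing other than the ordinary Mackey restriction $Res^{G}_{P}$ (every subgroup of $P$ is a $p$-subgroup, so there is no ``truncation'' happening), and it obviously lands in $Mack_{k}(P)$; there is no delicate question of whether the $G$-conjugations $c_{g,Q}$ cause trouble, because restriction simply forgets them. More importantly, you should \emph{not} try to prove $F$ full and faithful directly via fusion control, Glauberman correspondence, or an analysis of which $p$-permutation modules occur in $b_{0}$ and how their vertices and Brauer quotients line up. None of that is needed. The paper's argument is purely formal: (1) the unit $\delta$ and counit $\gamma'$ of the adjunction, when applied to a projective Mackey functor, are maps between projective Mackey functors (because $Res^{G}_{P}$ and $b_{0}^{\mu}Ind_{P}^{G}$ are both exact with two-sided adjoints, hence preserve projectives); (2) the evaluation at $1$ of these units and counits are exactly the units and counits of the classical adjunction on module categories, which are isomorphisms because $kGb_{0}\cong kP$; (3) Lemma \ref{fitting} then says $\delta_{M}$ and $\gamma'_{M}$ are isomorphisms whenever $M$ is projective; (4) one extends to arbitrary $M$ by choosing a projective resolution and noting that the unit/counit applied termwise are isomorphisms, hence so is the map on the homology $M$. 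The Burnside bimodule route you sketch as a ``cleaner'' alternative, and the bimodule-projectivity check, are likewise unnecessary. In short: your first paragraph and your identification of Lemma \ref{fitting} are on target, but your ``main obstacle'' paragraph is a detour — the naturality of the unit/counit plus Lemma \ref{fitting} already dispose of the fullness/faithfulness question without any local analysis.
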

\begin{re}\label{ce}
If $b$ is a nilpotent block, for some $m\in \mathbb{N}$, we have an isomorphism of algebras $kGb\cong Mat(m,kP)$, where $P$ is a defect group of the block $b$. This is not the case for the Mackey algebras. Example if $G=S_{3}$ and $k$ is a field of characteristic $2$. Let $b$ be the principal block of $kS_{3}$, one can check that $dim_{k}(\mu_{k}(C_2))= 6$ and $dim_{k}(\mu_{k}^{1}(b))=56$ (for a proof see Lemme $4.4.2$ \cite{these}).
\end{re}
\begin{proof}
Let us remark that we do not assume the Sylow $p$-subgroups to be abelian.
\newline Recall that the principal block idempotent of $kG$ is $b_{0}=\frac{1}{|N|}\sum_{n\in N}n$, so we have: $kGb_{0}\cong kP$. In consequence, there is a Morita equivalence $kGb_{0}$-$Mod\cong$ $kP$-$Mod$. This equivalence is given by the following pair of adjoint functors $Res^{G}_{P}\dashv b_{0}Ind_{P}^{G}$.
\newline We use the fact that, in our situation, there are three pairs of adjoint functors $res^{G}_{P} \dashv Ind_{P}^{G}$. As we saw, there is one pair for the restriction and induction between the categories of modules over the group algebra. There is an other one between the categories of sets with a group action. Here, we denote by $\epsilon$ and $\eta$ are the unit and co-unit of the usual adjunction $Ind_{P}^{G} \dashv Res^{P}_{G}$ between the categories of finite $G$-sets and finite $P$-sets (see Section $4$ \cite{tw_simple} for an explicit definition of these natural transformations). \\
\newline Finally, at the level of Mackey functors, we also have a restriction and an induction functors. That is: \begin{equation*}Res^{G}_{P}: Mack_{k}(b_{0}) \to Mack_{k}(P),\end{equation*}and \begin{equation*}Ind_{P}^{G}: Mack_{k}(P)\to Mack_{k}(G).\end{equation*} Applying the block idempotent $b_{0}^{\mu}$, we have a functor \begin{equation*}b_{0}^{\mu}Ind_{P}^{G}: Mack_{k}(P)\to Mack_{k}(b_{0}).\end{equation*} 
$1.$ The functor $Res^{G}_{P}$ is both left and right adjoint to the functor $b_{0}^{\mu} Ind_{P}^{G}$, since it is the case for $Ind_{P}^{G}$ and $Res^{G}_{P}$. Recall that the unit and the co-unit of the above adjunction are given by:
\begin{itemize}
\item The unit of the adjunction $Res^{G}_{P} \dashv b_{0}^{\mu} Ind_{P}^{G}$ is the natural transformation $\delta$ defined by: if $M \in Mack_{k}(G)$, then 
\begin{equation*}
\delta_{M}=b_{0}^{\mu} M^{*}(\epsilon) : M\to b_{0}^{\mu}Ind_{P}^{G} Res^{G}_{P}M.
\end{equation*}
The co-unit is the natural transformation $\gamma$ defined by: if $M\in Mack_{k}(P)$, then
\begin{equation*}
\gamma_{M}=b_{0}^{\mu}M^*(\eta) : Res^{G}_{P} b_{0}^{\mu}Ind_{P}^{G}M \to M.
\end{equation*}
\item For the adjunction $b_{0}^{\mu} Ind_{P}^{G}\dashv Res^{G}_{P}$, the unit is the natural transformation $\delta'$ defined by: if $M\in Mack_{k}(P)$, then
\begin{equation*}
\delta'_{M}=M_{*}(\eta) : M\to Res^{G}_{P}b_{0}^{\mu} Ind_{P}^{G}M.
\end{equation*}
The co-unit is the natural transformation $\gamma'$ defined by: if $M\in Mack_{k}(G)$, then
\begin{equation*}
\gamma'_{M} = b_{0}^{\mu}M_{*}(\epsilon): b_{0}^{\mu} Ind_{P}^{G}Res^{G}_{P}M \to M.
\end{equation*}
\end{itemize}
$2.$ Let $M$ be a projective Mackey functor in $Mack_{k}(b_{0})$, and let $N$ be a projective Mackey functor in $Mack_{k}(P)$. Here we use Green's notation for the evaluation. We have a natural isomorphism of $kG$-modules $$(Ind_{P}^{G}N)(1)\cong Ind_{P}^{G}(N(1)),$$
where the first induction functor is the induction of Mackey functors and the second is the induction of $kP$-modules. 
As well, we have an isomorphism of $kP$-modules
\begin{equation*}
Res^{G}_{P}(N)(1) \cong Res^{G}_{P}(N(1)),
\end{equation*}
where the first restriction is a restriction of Mackey functors and the second is the restriction of $kG$-modules. The proof is straightforward but can be found in Proposition $5.2$ \cite{tw}. So the evaluation at $1$ of the units and co-units $\delta_{M},\gamma_{N}$ and $\delta'_{N},\gamma'_{M}$ are the units and co-units of the adjunction $res^{G}_{P} \dashv Ind_{P}^{G}$ between the categories of modules over group algebras. Since the last adjunction is an equivalence of categories, we have that the following composition is equal to $Id_{M(1)}$:
\begin{equation*}
\xymatrix@!{
M(1)\ar[r]^{\delta_{M}(1)} & b_{0}^{\mu} Ind_{P}^{G}Res^{G}_{P}(M)(1) \ar[r]^{\gamma'_{M}(1)} & M(1).
}
\end{equation*}
Moreover, the map $\gamma_{N}(1) \circ \delta'_{N}(1)$ is equal to $Id_{N(1)}$. By Lemma \ref{fitting} the maps $\delta_{M}$ and $\gamma'_{M}$ are two inverse isomorphisms and the maps $\gamma_N$ and $\delta'_{N}$ are two inverse isomorphisms of Mackey functors. 
\newline $3.$ If $M\in Mack_{k}(b_{0})$, let $P_{\bullet}$ be a projective resolution of $M$ in $Mack_{k}(b_{0})$, then we have the following commutative diagram,
\begin{equation*}
\xymatrix{
\cdots \ar[r] & P_{1}\ar[r]\ar[d]^{\delta_{P_1}}& P_{0}\ar[r]\ar[d]_{\delta_{P_{0}}} & M\ar[d]_{\delta_{M}}\ar[r] & 0\\ 
\cdots \ar[r] & b_{0}Ind_{P}^{G}Res^{G}_{P}(P_{1})\ar[r]& b_{0}Ind_{P}^{G}Res^{G}_{P}(P_{0})\ar[r] & b_{0}Ind_{P}^{G}Res^{G}_{P}(M)\ar[r] & 0
}
\end{equation*}
Since the maps $\delta_{P_{i}}$ for $i\geqslant 0$ are isomorphisms, then $\delta_{M}$ is an isomorphism. By the same method, if $N\in Mack_{k}(P)$, then $\gamma_{N}$ is an isomorphism. 
\end{proof}
\begin{re}
One can see that we did not use the fact that the group $G$ is $p$-nilpotent in the proof. Instead we use the fact that the restriction functor is an equivalence between the categories $kGb_{0}$-$Mod$ and $kP$-$Mod$. More generally, let $G$ be a finite group and let $H$ be a subgroup of $G$. Let $b$ be a block of $kG$ and $c$ be a block of $kH$. If the functor $c \cdot Res^{G}_{H} :  kGb$-$Mod \to kHc$-$Mod$ is an equivalence of categories, then $c^{\mu}Res^{G}_{H} : Mack_{k}(b)\to Mack_{k}(c)$ is an equivalence of categories. 
\end{re}
\begin{coro}
There is an isomorphism of algebras $\mu_{k}^{1}(b_{0})\cong kB(X^{2})$ for the $P$-set $X\cong Iso^{P}_{G/N}Def^{G}_{G/N}\Omega_{G}$, and $kB(X^2)$ is the evaluation of the Burnside functor at $X^{2}$ (see example \ref{burnside}).
\end{coro}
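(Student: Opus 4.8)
The strategy is to promote the Morita equivalence supplied by Theorem~\ref{nil} to an isomorphism of algebras, by carrying the regular module across it and recognising its endomorphism algebra as a Burnside ring. First, since $P$ is a $p$-group, every subgroup of $P$ is a $p$-subgroup, so $Mack_{k}(P,1)=Mack_{k}(P)$, $\mu_{k}^{1}(P)=\mu_{k}(P)$, and Proposition~\ref{burnside_alg} already gives $\mu_{k}(P)\cong kB(\Omega_{P}^{2})$. By Theorem~\ref{nil} the functor $Res^{G}_{P}$ is an equivalence $Mack_{k}(b_{0})\to Mack_{k}(P)$; in particular $\mu_{k}^{1}(b_{0})$ and $\mu_{k}(P)$ are Morita equivalent, and what remains is to pin down the correct $P$-set $X$.

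Next I would realise $\mu_{k}^{1}(b_{0})$ as an endomorphism algebra of a Burnside functor. Let $kB_{\Omega_{G}}$ be the Mackey functor $Y\mapsto kB(\Omega_{G}\times Y)$, which under the isomorphism of Proposition~\ref{burnside_alg} is the regular $\mu_{k}(G)$-module. Using Bouc's formula for the block idempotent $b_{0}^{\mu}$ (Theorem~4.5.2 of~\cite{bouc_blocks}), which is built from generators $t^{H}_{Q}$, $r^{H}_{Q}$, $c_{x,Q}$ indexed by $p$-subgroups $Q$, one checks that $b_{0}^{\mu}\mu_{k}(G)\subseteq\mu_{k}^{1}(G)$; hence the Mackey functor $b_{0}^{\mu}kB_{\Omega_{G}}$ lies in $Mack_{k}(b_{0})$ and is a copy of the regular $\mu_{k}^{1}(b_{0})$-module, so that $\mu_{k}^{1}(b_{0})\cong End_{Mack_{k}(b_{0})}(b_{0}^{\mu}kB_{\Omega_{G}})^{\mathrm{op}}$. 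Applying the equivalence $Res^{G}_{P}$ then reduces the whole problem to identifying the Mackey functor $Res^{G}_{P}(b_{0}^{\mu}kB_{\Omega_{G}})$.

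This identification is the technical heart of the argument, and the step I expect to be the main obstacle. Using the Dress description $Res^{G}_{P}M(Z)=M(Ind^{G}_{P}Z)$, the Frobenius isomorphism of $G$-sets $\Omega_{G}\times Ind^{G}_{P}Z\cong Ind^{G}_{P}(Res^{G}_{P}\Omega_{G}\times Z)$, and the equivalence between the slice category of $G$-sets over $Ind^{G}_{P}W$ and that of $P$-sets over $W$ (which gives $kB^{G}(Ind^{G}_{P}W)\cong kB^{P}(W)$ naturally in $W$), one first gets an isomorphism of Mackey functors $Res^{G}_{P}kB_{\Omega_{G}}\cong kB^{P}_{W}$ with $W:=Res^{G}_{P}\Omega_{G}$, where $kB^{P}_{W}(Z)=kB^{P}(W\times Z)$. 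It then remains to see that applying the idempotent $b_{0}^{\mu}$ collapses $W$ to $X:=Iso^{P}_{G/N}Def^{G}_{G/N}\Omega_{G}$, and this is where the hypothesis $kGb_{0}\cong k[G/N]$ enters: since $b_{0}=\frac{1}{|N|}\sum_{n\in N}n$ acts on a permutation module $k[G/L]$ as the projection onto $N$-orbits, one has $b_{0}k[G/L]\cong k[(G/L)/N]=k[Iso^{P}_{G/N}Def^{G}_{G/N}(G/L)]$ as $kP$-modules, and the parallel description of the action of $b_{0}^{\mu}$ on the values of a Mackey functor (it acts on $M(1)$ by $\frac{1}{|N|}\sum_{n\in N}c_{n,1}$) shows that $Res^{G}_{P}(b_{0}^{\mu}kB_{\Omega_{G}})\cong kB^{P}_{X}$. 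Upgrading this from a statement about modules to one about Burnside functors, while keeping track of the interplay between restriction, the explicit idempotent, and the functoriality of the Burnside construction, is the delicate point.

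Finally I would compute the endomorphism algebra of $kB^{P}_{X}$. Since $kB^{P}_{X}$ is a direct sum of representable Mackey functors, it represents the functor $M\mapsto M(X)$ on $Mack_{k}(P)$, whence $End_{Mack_{k}(P)}(kB^{P}_{X})\cong kB^{P}_{X}(X)=kB^{P}(X^{2})$, with composition of endomorphisms corresponding, up to taking opposites, to the pullback product of Example~\ref{burnside}. As the involution interchanging the two projection maps is an anti-automorphism of $kB^{P}(X^{2})$, this algebra is isomorphic to its opposite, and chaining the isomorphisms yields $\mu_{k}^{1}(b_{0})\cong End_{Mack_{k}(b_{0})}(b_{0}^{\mu}kB_{\Omega_{G}})^{\mathrm{op}}\cong End_{Mack_{k}(P)}(kB^{P}_{X})^{\mathrm{op}}\cong kB(X^{2})$. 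The closing remark of the corollary, that $kB(X^{2})$ is the evaluation of the Burnside functor at $X^{2}$, is then just the definition recorded in Example~\ref{burnside}.
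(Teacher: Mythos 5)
Your overall scaffolding matches the paper's: invoke Theorem~\ref{nil}, transport the regular $\mu_{k}^{1}(b_{0})$-module across $Res^{G}_{P}$, identify it as a Dress construction $kB_{X}$ of the Burnside functor for a suitable $P$-set $X$, and then read off $\mu_{k}^{1}(b_{0})\cong End_{Mack_k(P)}(kB_{X})\cong kB(X^{2})$ via representability. The step you flag as ``the delicate point''---upgrading the module-level identification $b_{0}k[G/L]\cong k[(G/L)/N]$ to an isomorphism of Mackey functors $Res^{G}_{P}(b_{0}^{\mu}kB_{\Omega_{G}})\cong kB^{P}_{X}$---is genuinely where you leave a gap, and it is precisely where the paper takes a cleaner route that makes the upgrade unnecessary. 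The paper first argues abstractly that $Res^{G}_{P}(B_{0})\cong kB_{X}$ for \emph{some} $P$-set $X$, because $Res^{G}_{P}(B_{0})$ is finitely generated projective in $Mack_{k}(P)$ and, for a $p$-group, every such functor is a Dress construction of the Burnside functor. This reduces the whole identification to determining the $P$-set $X$, which is read off entirely at the level of the evaluation at $1$: one computes $kX\cong Res^{G}_{P}(B_{0})(1)=b_{0}t^{1}_{1}\mu_{k}^{1}(G)\cong\bigoplus_{H\leqslant G}Res^{G}_{P}(k[G/NH])$ using the explicit basis of Proposition~\ref{basis}, and then invokes Conlon's theorem that for a $p$-group two permutation modules are isomorphic if and only if the underlying $P$-sets are. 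That last ingredient---Conlon's theorem \cite{conlon}---is the missing lemma in your write-up; without it (or some substitute) your Frobenius-and-slice-category route still leaves you with an isomorphism of Burnside functors to verify by hand, whereas with it the module computation you already did is actually sufficient. On the positive side, your attention to the $(-)^{\mathrm{op}}$ coming from Morita theory and its absorption via the anti-involution of $kB(X^{2})$ is more careful than the paper, which passes over this point silently.
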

\begin{proof}
\noindent By Theorem \ref{nil}, we have an equivalence of categories {$\mu_{k}^{1}(b_{0})$-$Mod\cong$ $\mu_{k}(P)$-$Mod$}, so by Morita Theorem, there is an isomorphism of algebras $\mu_{k}^{1}(b_{0})\cong End_{\mu_{k}(P)}(T)$, where $T$ is the bimodule $Res^{G}_{P}(\mu_{k}^{1}(b_0))$. We will denote by $B_0$ the Mackey functor, in the sense of Dress, which corresponds to the $\mu_{k}^{1}(b_0)$-module $\mu_{k}^{1}(b_0)$ under the usual equivalence of categories. Since the finitely generated projective Mackey functors of $Mack_{k}(P)$ are exactly the Dress constructions $kB_{X}$  of the Burnside functor where $X$ is a finite $P$-set, we have 
\begin{equation*}
Res^{G}_{P}(B_{0}) \cong kB_{X},
\end{equation*}
for some $P$-set $X$. In particular, using Green's notation for the evaluation, we have: $kX\cong Res^{G}_{P}(B_{0})(1)$. But in the equivalence of categories between $\mu_{k}(G)$-$Mod$ and $Mack_{k}(G)$, the evaluation at $1$ correspond to the multiplication by the idempotent $t_{1}^{1}$. So we have: $Res^{G}_{P}(B_{0}(1))=b_{0}t^{1}_{1}\mu_{k}^{1}(G).$\newline A basis of $\mu_{k}^{1}(G)$ is given by $t^{A}_{B}xR^{C}_{B^{x}}$, where $A$ and $C$ are subgroups of $G$,  the elements $x\in [A\backslash G/ C]$ and $B$ is a $p$-subgroup of $A\cap {^{x}C}$ up to $A\cap {^{x}C}$-conjugacy . So a basis of $t^{1}_{1}\mu_{k}^{1}(G)$ is the set of $t^{1}_{1}xr^{H}_{1}$ for $x\in G$ and $H\leqslant G$. Set $\gamma_{H,x} = t^{1}_{1}b_{0}xR^{H}_{1}$. We have that $\gamma_{H,nx}=\gamma_{H,x}$ and $\gamma_{H,xh}=\gamma_{H,x}$ for $x\in G$, $n\in N$ and $h\in H$. The set $\{\gamma_{H,x}\ ;\ H\leqslant G,\ \ x\in G/NH\}$ is a $\mu_{k}(P)$-basis of $t^{1}_{1}\mu_{k}^{1}(b_0)$. The action of $y\in P$ on a element $\gamma_{H,x}$ is given by $y.\gamma_{H,x}=\gamma_{H,yx}$. So, 
\begin{equation*}
kX\cong \bigoplus_{H\leqslant G} Res^{G}_{P}(kG/NH),
\end{equation*}
but for a $p$-group $P$, two permutation modules are isomorphic if and only if the corresponding $P$-sets are isomorphic by \cite{conlon}. Hence
\begin{align*}
X&\cong \sqcup_{H\leqslant G} Res^{G}_{P}(G/NH)\\
&\cong \sqcup_{H\leqslant G}  Res^{G}_{P}Ind^{G}_{G/N}Def^{G}_{G/N}(G/H)\\
&\cong Res^{G}_{P}(Inf^{G}_{G/N}Def^{G}_{G/N}(\Omega_{G}))\\
&\cong Iso^{P}_{G/N} Def^{G}_{G/N}(\Omega_{G}).
\end{align*}
\newline So $Res^{G}_{P}(B_{0})\cong kB_{X}$, where $X=Iso^{P}_{G/N} Def^{G}_{G/N}(\Omega_{G})$, and finally, we have: \begin{equation*}\mu_{k}^{1}(b_0)\cong End_{Mack_{k}(P)}(kB_{X})\cong kB(X^2),\end{equation*} by adjunction property of $kB_{X}$ see (Proposition $3.1.3$ of \cite{bouc_green} in a more general setting). 
\end{proof}
This can be viewed as an analogue of the isomorphism $kGb\cong Mat(n,kP)$ for nilpotent blocks. 
\newline\indent For non principal blocks of $p$-nilpotent groups, or more generally for nilpotent blocks, the situation is much more complicate. The next example shows that, in general, if $b$ is a nilpotent block with defect group $D$, then the Mackey algebra $\mu_{k}^{1}(b)$ is \emph{not} Morita equivalent to the Mackey algebra $\mu_{k}(D)$. 
\begin{ex}\label{ex_sl}
Let $G=SL_{2}(\mathbb{F}_{3})\cong Q_{8}\rtimes C_3$, and $k$ be a field of characteristic 3. The group $G$ is $3$-nilpotent. Let $b$ be the block idempotent such that the block $kGb$ contains the simple $kG$-module $W$ of dimension $2$. Then $kGb$-$Mod\cong$ $kC_{3}$-$Mod$. One can ask if the same happens to the corresponding blocks of the Mackey algebras. But the Cartan matrix of $\mu_{k}(C_3)$ is $\left(\begin{array}{cc}2 & 1 \\1 & 3\end{array}\right)$ and the Cartan matrix of $\mu_{k}^{1}(b)$ is $\left(\begin{array}{cc}3 & 2 \\2 & 3\end{array}\right)$. So there is no Morita equivalence between these two algebras. One can compute these matrices by using Proposition \ref{decomp} or see below.
\newline By Theorem \ref{brauer_tree} they are in fact derived equivalent. In this example it is not difficult to make everything explicit: the projective indecomposable Mackey functors in the block $b^{\mu}$ are in bijection with the indecomposable $p$-permutation modules in the corresponding blocks. These indecomposable $p$-permutation modules are: $Ind_{Q_{8}}^{G}(W)$ and $Ind_{C_6}^{G} Inf_{C_2}^{C_6}k_\epsilon$ where $C_6\cong Z(Q_8)\times C_3$,  and $k_\epsilon$ is the non trivial simple $kC_2$-module. So the projective indecomposable Mackey functors are $P=Ind_{Q_{8}}^{G}FP_{W}$ and $Q=Ind_{C_6}^{G}kB_{\epsilon}$, where $kB_{\epsilon}$ is a direct summand of the Dress construction of the Burnside functor for $C_6$: $kB_{C_6/C_3}$.\newline More precisely, for each $C_6$-sets, if $Y$ is any finite $C_6$-set, then $kB_{C_{6}/C_3}(Y)=kB(Y\times C_{6}/C_{3})$ is a right $kC_{2}$-module, since \begin{equation*}C_6/C_3\cong End_{C_{6}-set}(C_{6}/C_{3})\mapsto End_{C_{6}-set}(Y\times C_{6}/C_{3}).\end{equation*} So $kB_{\epsilon}(Y)=kB(Y\times C_{6}/C_{3})\otimes_{k(C_{6}/C_{3})}k_\epsilon.$ Now this it is not difficult to compute the Cartan matrix of the block.\newline The derived equivalence can be given by a two terms complex: 
\begin{equation*}
\xymatrix{
0\ar[r] & P^{2}\ar[r]^{(\pi,0)}& Q\ar[r] & 0
}
\end{equation*}
where $\pi$ is the morphism of maximal rank between $P$ and $Q$. 
\end{ex}
It should be notice that in this example the Morita equivalence between $kGb$ and $kC_{3}$ is \emph{not} given by a $p$-permutation bimodule. If we want to find an equivalence between these two algebras which respects the subcategories of $p$-permutation modules, we need to replace the bimodule which gives the Morita equivalence by a splendid tilting complex (see \cite{splendid} Section $7.4$).
\subsection{Groups with Sylow $p$-subgroup of order $p$.}
 \begin{lemma}
 Let $b$ be a block of $RG$ with defect group $D$. Then $\mu_{R}^{1}(b)$ is a symmetric algebra if and only if $|D|<p^{2}$. 
 \end{lemma}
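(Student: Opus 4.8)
The plan is to prove the two implications separately, working over $k$ first in both. For the passage between $\mathcal{O}$ and $k$ one uses that $\mu^{1}_{\mathcal{O}}(b)$ is $\mathcal{O}$-free of finite rank (Proposition~\ref{basis}) with $\mu^{1}_{\mathcal{O}}(b)/\mathfrak{p}\mu^{1}_{\mathcal{O}}(b)\cong\mu^{1}_{k}(b)$: reducing a symmetrizing form modulo $\mathfrak{p}$ keeps it symmetric, and a free symmetric bilinear form over a complete local ring is nondegenerate iff its reduction is, so $\mu^{1}_{\mathcal{O}}(b)$ symmetric $\Rightarrow$ $\mu^{1}_{k}(b)$ symmetric; conversely, in the cases where we need the ``if'' direction ($|D|\le p$) the order $\mu^{1}_{\mathcal{O}}(b)$ gets identified with a Green order, which is symmetric by construction and reduces to the Brauer tree algebra $\mu^{1}_{k}(b)$.

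\emph{The ``if'' direction.} If $D=1$, the block $RGb$ has defect zero, hence exactly one simple module and exactly one indecomposable $p$-permutation module (the projective simple one). By Proposition~\ref{decomp} the decomposition matrix of $\mu^{1}_{\mathcal{O}}(b)$ is then a single row with a unique nonzero entry, equal to $1$ (the only non-vanishing Brauer quotient is at the trivial subgroup, where $\widehat{V[1]}$ is $\mathcal{O}$-simple); so the Cartan matrix of $\mu^{1}_{k}(b)$ is $(1)$, $\mu^{1}_{k}(b)$ is semisimple and $\mu^{1}_{\mathcal{O}}(b)$ is a maximal order — both symmetric. If $|D|=p$, only the $p$-subgroups $1$ and $D$ are relevant; one uses the classical structure of a block with cyclic defect group of order $p$ (Brauer, Dade) together with Proposition~\ref{decomp} to write down the quiver and relations of $\mu^{1}_{k}(b)$, recognising it as a Brauer tree algebra and $\mu^{1}_{\mathcal{O}}(b)$ as the corresponding Green order; both are symmetric. (This is where the ``defect one'' analysis of the paper is invoked.)

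\emph{The ``only if'' direction.} This is the heart of the matter; I would prove the contrapositive by showing that if $|D|\ge p^{2}$ then $\mu^{1}_{k}(b)$ is not even self-injective. Pick a subgroup $Q\le D$ of order $p^{2}$ and a subgroup $Q_{1}<Q$ of order $p$, and let $V$ be a simple $k\overline{N}_{G}(Q)$-module such that $S_{Q,V}$ lies in $b^{\mu}$; let $P=P_{Q,V}$ be its projective cover in $Mack_{k}(G,1)$. The claim is that
\begin{equation*}
\dim_{k}\operatorname{Hom}_{\mu^{1}_{k}(G)}\bigl(S_{Q,V},\,P\bigr)\ \ge\ 2 .
\end{equation*}
Since $(K,\mathcal{O},k)$ is large enough, $k$ splits $\mu^{1}_{k}(G)$, so $\mu^{1}_{k}(b)$ self-injective (in particular symmetric) would force $\operatorname{soc}P\cong\operatorname{top}P=S_{Q,V}$ to be simple, a contradiction. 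To prove the claim I would inspect $P$ at its ``$Q$-spot'' $t^{Q}_{Q}\cdot P$: using the Mackey relations and Lemma~\ref{lemme_brauer} (the Brauer construction $(-)^{Q_{1}}$ sending $P$ to a projective functor lifting $P^{Q_{1}}$, the identification $P^{Q}(Q/Q)\cong P(1)[Q]$, and the fact that $Q$ lies in a vertex of $P(1)$), one checks that the two relative transfers
\begin{equation*}
t^{Q}_{Q_{1}}\,r^{Q}_{Q_{1}}\qquad\text{and}\qquad t^{Q}_{1}\,r^{Q}_{1}
\end{equation*}
applied to a suitable element of $P$ yield two $k$-linearly independent elements annihilated by $\operatorname{rad}\mu^{1}_{k}(G)$, i.e.\ two independent maps $S_{Q,V}\to P$. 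The prototype computation is the principal block of $kD$ itself, where $\mu^{1}_{k}(b)=\mu_{k}(D)$ and one sees directly that $t^{D}_{1}r^{D}_{1}$ and $t^{D}_{Q_{1}}r^{D}_{Q_{1}}$ are nonzero, independent, and in $\operatorname{soc}P_{D,k}$ (for $D=C_{p^{2}}$ this already gives $\operatorname{soc}P_{D,k}\supseteq S_{D,k}^{\oplus 2}$).

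The main obstacle is this last step for a \emph{general} block: one must ensure the relative transfer elements survive multiplication by $b^{\mu}$ and stay linearly independent inside $P_{Q,V}$, i.e.\ one must control $P_{Q,V}$ along the chain $1<Q_{1}<Q$ of subgroups of the defect group. I expect to reduce this, via the adjunction $(-)^{Q_{1}}\dashv Ind\,Inf$ of Lemma~\ref{lemme_brauer} (which carries $b^{\mu}$ to a sum of block idempotents of $\mu^{1}_{k}(\overline{N}_{G}(Q_{1}))$ and projective functors to projective functors), to the corresponding non-self-injectivity statement for the smaller group $\overline{N}_{G}(Q_{1})$, and ultimately to the explicit local computation above.
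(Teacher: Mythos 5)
Your approach is genuinely different from the paper's, and in its present form it contains a real gap. The paper does not attempt a case analysis nor a direct socle computation: it first cites the result (Corollary~$4.8$ of \cite{trace_map_mack}) that $\mu_{R}(D)$ is symmetric if and only if $|D|<p^{2}$, and then reduces the lemma to the statement ``$\mu_{R}^{1}(b)$ is symmetric $\iff$ $\mu_{R}(D)$ is symmetric''. The reduction is done in one stroke, in both directions at once, by combining the characterization of symmetric algebras through the natural duality
\begin{equation*}
\operatorname{Hom}_{A}(P,M)\cong \operatorname{Hom}_{R}\bigl(\operatorname{Hom}_{A}(M,P),R\bigr)
\end{equation*}
(Proposition~$2.7$ of \cite{broue_higman}) with the adjunction $\operatorname{Res}^{G}_{D}\dashv b^{\mu}\operatorname{Ind}_{D}^{G}$ and the observation that every finitely generated projective Mackey functor for $D$ (resp.\ for $G$ in the block) is a direct summand of the restriction (resp.\ induction) of one on the other side. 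This is considerably more economical than what you propose, works uniformly for $R=\mathcal{O}$ and $R=k$, and avoids both the block-by-block structure theory in your ``if'' direction and the delicate local computation in your ``only if'' direction.

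The gap in your argument is in the ``only if'' direction, and you flag it yourself. The claim that the two relative transfers $t^{Q}_{Q_{1}}r^{Q}_{Q_{1}}$ and $t^{Q}_{1}r^{Q}_{1}$ produce two $k$-independent elements of $\operatorname{soc}P_{Q,V}$ is only verified for the prototype $\mu_{k}(D)$ with $D$ cyclic of order $p^{2}$; for a general block with $|D|\ge p^{2}$ you write ``I expect to reduce this\dots'', which is a plan, not a proof. Two concrete issues are hiding there: (a) you must show there \emph{exists} a simple $S_{Q,V}$ lying in the block $b^{\mu}$ with $Q$ of order $p^{2}$, i.e.\ you need to know which pairs $(Q,V)$ index simples in $b^{\mu}$ (this is not automatic and is part of what Bouc's block decomposition controls); (b) the proposed descent via $(-)^{Q_{1}}\dashv \operatorname{Ind}\operatorname{Inf}$ carries $b^{\mu}$ to a \emph{sum} of block idempotents of $\mu^{1}_{k}(\overline{N}_{G}(Q_{1}))$, so the inductive statement you want to invoke for the smaller group is not literally the same one, and you would have to track along which Brauer correspondent the chosen simple survives. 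None of this is insurmountable, but it has not been done. If you want a self-contained argument, the simplest repair is to adopt the paper's reduction: once you know $\mu_{R}^{1}(b)$ is symmetric precisely when $\mu_{R}(D)$ is, the whole lemma collapses to the explicit analysis of $\mu_{R}(D)$, which you can either cite or do by hand for the group $D$ alone (your prototype computation then becomes the only local computation needed, carried out inside $\mu_{k}(D)$ rather than inside an arbitrary block).
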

 \begin{proof}
By Corollary $4.8$ of \cite{trace_map_mack} then Mackey algebra $\mu_{R}(D)$ is symmetric if and only if $|D|< p^2$. Now, we just have to prove that $\mu_{R}^{1}(b)$ is a symmetric algebra if and only if $\mu_{R}(D)$ is a symmetric algebra. 
Recall that a finite dimensional $R$-algebra $A$ is symmetric if and only if for every finitely generate projective $A$-module $P$ and for every finitely generated $A$-module which is $R$-projective, we have an isomorphism of $R$-modules:
\begin{equation*}
Hom_{A}(P,M)\cong Hom_{R}(Hom_{A}(M,P),R),
\end{equation*}
and this isomorphism is natural in $P$ and $M$ (see Proposition 2.7 \cite{broue_higman}). \\
Suppose that $\mu_{R}^{1}(b)$ is symmetric. Let $L$ be a finitely generated projective $\mu_{R}^{1}(b)$-module i.e. by using the usual equivalence of categories $L$ is a finitely generated projective Mackey functor. Let $M$ be an $R$-projective finitely generated Mackey functors for $D$. Then
\begin{align*}
Hom_{Mack_{R}(D)}(Res^{G}_{D}(L),M)&\cong Hom_{Mack_{R}(G)}(L,b^{\mu}Ind_{D}^{G}(M))\\
&\cong Hom_{R}(Hom_{Mack_{R}(G)}(L,b^{\mu}Ind_{P}^{G}(M)),R)\\
&\cong Hom_{R}(Hom_{Mack_{R}(D)}(Res^{G}_{D}(L),M),R).
\end{align*}
But every finitely projective Mackey functor for the group $D$ is direct summand of the restriction of a finitely generated Mackey functor for $G$. Since the previous isomorphism is natural, it implies an isomorphism between the corresponding direct summands. Conversely, since every projective Mackey functor for $G$ is direct summand of the induction of a projective Mackey functor for $D$, by similar arguments, we have the result. 
 \end{proof}
\begin{theo}\label{brauer_tree}\
Let $G$  and $H$ be two finite groups. Let $b$ be a block of $kG$ with cyclic defect group of order $p$, and $c$ be a block of $kH$ with cyclic defect group of order $p$. Then, \\
$D^{b}(kGb$-$Mod)\cong$ $D^{b}(kHc$-$Mod)$ if and only if $ D^{b}(\mu_{k}^{1}(b)$-$Mod)\cong$ $D^{b}(\mu_{k}^{1}(b')$-$Mod)$.
\end{theo}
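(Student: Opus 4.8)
The plan is to reduce the statement to the classical theory of Brauer tree algebras. First I would recall the structural fact (announced in the text just before the theorem, via Corollary~4.8 of \cite{trace_map_mack}) that for a block $b$ of $kG$ with defect group of order $p$ the $p$-local Mackey algebra $\mu_k^1(b)$ is a symmetric algebra; combined with the fact that $\mu_K^1(G)$ is semisimple, that each block of $\mu_k^1$ has finitely many simple modules, and — crucially — that over $\mathcal{O}$ it is a \emph{Green order} and over $k$ it is a \emph{Brauer tree algebra}, this puts us squarely in the setting where Rickard's and Linckelmann's theory of derived equivalences of Brauer tree algebras applies. The key external input is the theorem (Rickard, refined by Linckelmann) that two Brauer tree algebras over $k$ are derived equivalent if and only if their Brauer trees have the same number of edges and the same multiplicity of the exceptional vertex; in particular the ``shape'' of the tree is irrelevant up to derived equivalence, and every Brauer tree algebra is derived equivalent to the corresponding Brauer star (the symmetric Nakayama algebra).

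Next I would identify the two numerical invariants of the Brauer tree of $\mu_k^1(b)$. The number of edges equals the number of isomorphism classes of simple $\mu_k^1(b)$-modules, which by the description recalled in the excerpt (simple Mackey functors $S_{L,V}$ with $L$ a $p$-subgroup up to conjugacy and $V$ a simple $k\overline{N}_G(L)$-module in the block) is controlled by the block $b$ of $kG$ with defect $C_p$: here $L\in\{1,D\}$, and the count on each side is governed by the inertial index $e = e(b)$ of the block together with contributions from $\overline{N}_G(D)$. I would compute this explicitly and likewise read off the multiplicity of the exceptional vertex from the Cartan data (or from Proposition~\ref{decomp}). The point to establish is: \emph{the pair (number of edges, exceptional multiplicity) of the Brauer tree of $\mu_k^1(b)$ is determined by the pair of analogous invariants of the Brauer tree of $kGb$ itself} — so that a derived equivalence $D^b(kGb)\cong D^b(kHc)$, which by Rickard forces equality of these invariants for $kGb$ and $kHc$, also forces equality for $\mu_k^1(b)$ and $\mu_k^1(c)$, hence (again by Rickard) a derived equivalence $D^b(\mu_k^1(b))\cong D^b(\mu_k^1(c))$.

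For the converse direction the argument is symmetric: a derived equivalence between $\mu_k^1(b)$ and $\mu_k^1(c)$ forces equality of the tree invariants of the two Mackey algebras, and since those invariants determine (and are determined by) the invariants of $kGb$ and $kHc$, we recover a derived equivalence between the group blocks. In both directions one may even be concrete: the Broué conjecture is known for blocks with cyclic defect (it is a theorem of Rickard in this case), so $kGb$ is derived equivalent to its Brauer correspondent $kN_G(D)b'$, and one can try to lift a splendid tilting complex; the remark after the theorem statement and Example~\ref{ex_sl} suggest that keeping track of $p$-permutation modules is the right framework, and one expects the tilting complex for $\mu_k^1(b)$ to be built from the Brauer construction applied termwise to the one for $kGb$.

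The main obstacle is the middle step: showing that the combinatorial invariants of the Brauer tree of $\mu_k^1(b)$ depend only on the corresponding invariants of $kGb$, and not on finer features of $G$. The number of simple Mackey functors in the block brings in the groups $\overline{N}_G(D)$ and their modular representation theory, and one must check that the extra simple modules coming from the subgroup $L=D$ are counted in a way that is invariant under derived equivalence of the group blocks — equivalently, that this count is expressible through the inertial index $e(b)$ and the exceptional multiplicity, which are themselves derived invariants of $kGb$ by Rickard's classification. Determining the exceptional multiplicity of the Mackey Brauer tree (likely via the Cartan matrix computed from Proposition~\ref{decomp}, as illustrated in Example~\ref{ex_sl}) is the other delicate point; once both invariants are pinned down purely in terms of $e(b)$ and the exceptional multiplicity of $kGb$, the theorem follows formally from the Rickard--Linckelmann classification of derived equivalences of Brauer tree algebras.
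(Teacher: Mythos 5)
Your plan is essentially the paper's proof: reduce to Rickard's classification of Brauer tree algebras by the pair (number of edges, exceptional multiplicity), after checking that these two invariants of the tree of $\mu_k^1(b)$ are determined by the corresponding invariants of $kGb$. The paper resolves the step you flag as the ``main obstacle'' by citing Theorem~20.10 of Th\'evenaz--Webb \cite{tw}, which states precisely what your $(L,V)$-counting would yield: the Brauer tree of $\mu_k^1(b)$ contains the tree of $kGb$ as a subtree, has $2e$ edges where $e$ is the number of edges of the group-block tree, the same exceptional vertex and multiplicity, and the extra edges are twigs.
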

\begin{proof}
By Theorem 20.10 of \cite{tw}, in this situation, the blocks of Mackey algebras are Brauer tree algebras. Let $T_{\rm{Mack}}$ be the tree of this algebra. And $T_{\rm{Mod}}$ be the tree of the corresponding block of the group algebra. The tree $T_{\rm{Mod}}$ is isomorphic to a subtree of $T_{\rm{Mack}}$, still denoted by $T_{\rm{Mod}}$. Some properties of the tree $T_{\rm{Mack}}$ are determined by the knowledge of the tree $T_{\rm{Mod}}$. If $e$ is the number of edges of $T_{\rm{Mod}}$, then the number of edges of $T_{\rm{Mack}}$ is $2e$. The exceptional vertex of $T_{\rm{Mack}}$ is the same as the exceptional vertex of the tree $T_{\rm{Mod}}$. Each edge of $T_{\rm{Mack}}$ which is not in $T_{\rm{Mod}}$ is a twig. By general results on derived equivalences for Brauer tree algebras, two Brauer tree algebras with same exceptional multiplicity, over the same field, are derived equivalent if and only if they have the same number of edges.
\end{proof}
Even if the tree $T_{\rm{Mack}}$ seems to be determined by the group algebra $kG$, if two blocks of group algebras are Morita equivalent, it is not always true that the corresponding blocks for the Mackey algebras are Morita equivalent (see Example \ref{ex_sl}). The tree $T_{\rm{Mack}}$ is in fact determined by the corresponding block of $kG$ \emph{and} its Brauer correspondent in $N_{G}(P)$ where $P$ is the defect of the block. 
\begin{re}
In \cite{tw}, all the results are given for blocks of groups with a Sylow $p$-subgroup of order $p$. One can check that these results can be generalized quite easily to general blocks with defect group of order $p$ (see Chapter $6$ of \cite{these} for the proof). 
\end{re}
\begin{prop}\label{morita_p}
Let $G$ and $H$ be two finite groups. Let $b$ (resp. $c$) be a block of $kG$ (resp. $kH$) with a defect $p$-group of order $p$. If $kGb$ and $kHc$ are splendidly Morita equivalent, then there is an equivalence of categories: $\mu_{k}^{1}(e)$-$Mod\cong$ $\mu_{k}^{1}(c)$-$Mod$. 
\end{prop}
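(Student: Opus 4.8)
The plan is to reduce the statement to a comparison of Brauer trees. By Theorem $20.10$ of \cite{tw} (already used in the proof of Theorem \ref{brauer_tree}), both $\mu_{k}^{1}(b)$ and $\mu_{k}^{1}(c)$ are Brauer tree algebras over $k$. Two Brauer tree algebras over the same field are Morita equivalent if and only if their Brauer trees, with the position of the exceptional vertex and its multiplicity, are isomorphic as abstract (non-planar) trees; and such a tree together with its exceptional vertex can be read off the decomposition matrix (the ordinary characters index the vertices, the modular characters index the edges, the incidences are the non-zero decomposition numbers, and the exceptional vertex is the one carried by the $m$ identical columns of the exceptional characters). So it is enough to show that the decomposition matrix of $\mu_{k}^{1}(b)$ described in Proposition \ref{decomp} is transported onto that of $\mu_{k}^{1}(c)$ by the splendid Morita equivalence, compatibly with the exceptional vertex.

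First I would fix a $p$-permutation $(kGb,kHc)$-bimodule $M$ inducing the equivalence and set $F=M\otimes_{kHc}(-)$. Since $M$ is a summand of a permutation bimodule, $F$ and its inverse preserve $p$-permutation modules, and they preserve the vertices of indecomposable $p$-permutation modules because $F$ commutes with the Brauer construction; hence $F$ induces a vertex-preserving bijection between the indecomposable $p$-permutation $kGb$-modules and the $kHc$-ones, i.e. between the rows of the two decomposition matrices. As $F$ is a Morita equivalence it also induces the usual bijection between the ordinary characters of $KGb$ and $KHc$ preserving multiplicities; since by Proposition \ref{decomp} the columns indexed by the trivial subgroup $L=1$ simply record the ordinary characters afforded by the $p$-permutation modules (equivalently, they form the decomposition matrix of the cohomological Mackey algebra), these columns are matched by the above bijections.

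The remaining columns are those indexed by $L=P$, the only non-trivial $p$-subgroup up to conjugacy; by Proposition \ref{decomp} they record, for each indecomposable $p$-permutation module $W$ of vertex $P$, the ordinary character of the $k\overline{N}_{G}(P)$-module $W[P]$. Here I would invoke the localization properties of splendid Morita equivalences (see \cite{splendid} and the references therein): the Brauer construction $M[\Delta P]$ at the twisted diagonal is again a $p$-permutation bimodule, it induces a splendid Morita equivalence between the Brauer correspondent $b'$ in $N_{G}(P)$ and the Brauer correspondent $c'$ in $N_{H}(P)$ (equivalently between the corresponding blocks of $k\overline{N}_{G}(P)$ and $k\overline{N}_{H}(P)$), and for every $p$-permutation $kHc$-module $N$ there is a natural isomorphism $(M\otimes_{kHc}N)[P]\cong M[\Delta P]\otimes(N[P])$ of $k\overline{N}_{G}(P)$-modules. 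Consequently $W\mapsto W[P]$ intertwines the two equivalences, the $L=P$ columns are matched, and the exceptional ordinary characters of $b$ and $c$ (hence the exceptional vertices of $T_{\mathrm{Mack}}$, which by the proof of Theorem \ref{brauer_tree} are the ones coming from the group block) correspond under the induced bijections.

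Putting these steps together, the decomposition matrix of $\mu_{k}^{1}(b)$ agrees, after relabelling rows and columns, with that of $\mu_{k}^{1}(c)$, with corresponding exceptional vertices (hence equal exceptional multiplicity, which is anyway visible in the matrix); so $\mu_{k}^{1}(b)$ and $\mu_{k}^{1}(c)$ are Brauer tree algebras with isomorphic Brauer trees, and therefore $\mu_{k}^{1}(b)$-$Mod\cong\mu_{k}^{1}(c)$-$Mod$. I expect the hard part to be the localization step of the third paragraph, which is also exactly where the hypothesis \emph{splendid} is genuinely needed, in contrast with Example \ref{ex_sl}, where a non-splendid Morita equivalence of group blocks does not survive: one must pass from $C_{G}(P)$ to $N_{G}(P)$, keep track of the Brauer-correspondent block idempotents and of the inertial action, and verify both that $M[\Delta P]$ really induces a splendid Morita equivalence of the local blocks and that the displayed natural isomorphism is compatible with $F$ and with the $\overline{N}_{G}(P)$-module structures used in Proposition \ref{decomp}. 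The reconstruction of a Brauer tree from its decomposition matrix and exceptional vertex, used in the first paragraph, is elementary but should be recorded.
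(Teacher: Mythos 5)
Your proposal is correct in outline and reaches the same conclusion, but it follows a genuinely different and more laborious route than the paper. The paper's key observation, which you miss, is that in the defect-$p$ case the decomposition matrix of $\mu_{\mathcal{O}}^{1}(b)$ given by Proposition~\ref{decomp} has the very simple block form
\[
\bigl(\;D(\comu_{\mathcal{O}}(b))\ \big|\ \begin{smallmatrix}0_{e\times e}\\ \mathrm{Id}_{e\times e}\end{smallmatrix}\;\bigr),
\]
where $D(\comu_{\mathcal{O}}(b))$ is the decomposition matrix of the cohomological Mackey algebra: the $L=P$ columns contribute only a fixed $[0;\mathrm{Id}]$ block, because an indecomposable $p$-permutation module of vertex $P$ has Brauer quotient equal to a single projective indecomposable $k\overline{N}_G(P)$-module, and the $p'$-group $\overline{N}_G(P)$ has a trivial decomposition matrix. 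Thus the entire decomposition (hence Cartan) matrix of $\mu^1_{\mathcal{O}}(b)$ is determined by the one of $\comu_{\mathcal{O}}(b)$, and the paper simply lifts the splendid Morita equivalence from $k$ to $\mathcal{O}$ (Section~5 of~\cite{splendid}) and quotes Proposition~4.4 of~\cite{equiv_coho} to transport $D(\comu_{\mathcal{O}}(b))$ onto $D(\comu_{\mathcal{O}}(c))$. By contrast, you re-derive the $L=P$ matching from scratch by analysing the Brauer construction $M[\Delta P]$ of the bimodule and appealing to Rickard's localization theory; this is correct and does isolate precisely where splendidness enters, but it re-proves work that is already packaged in the cohomological statement and in the $[0;\mathrm{Id}]$ shape of the matrix. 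Two smaller points: you argue directly over $k$ but cite Proposition~\ref{decomp}, which is formulated over $\mathcal{O}$, so you should make the lift from $k$ to $\mathcal{O}$ explicit as the paper does; and your claim that the Brauer tree is read off the decomposition matrix together with the exceptional vertex needs the exceptional multiplicity as well (it is the same on both sides because it equals that of the group block), which you mention only in passing.
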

\begin{proof}
By Theorem 20.10 of \cite{tw} and Theorem \ref{brauer_tree}, the block algebras $\mu_{k}^{1}(e)$ and $\mu_{k}^{1}(f)$ are derived equivalent Brauer tree algebras. Since two Brauer tree algebras over the same field are Morita equivalent if and only if they have isomorphic trees and same exceptional multiplicity, it is enough too prove that they have the same Cartan matrices. We will prove that the decomposition matrices of $\mu_{\mathcal{O}}^{1}(b)$ and $\mu_{\mathcal{O}}^{1}(c)$ are the same. By Proposition \ref{decomp}, the decomposition matrices of $\mu_{\mathcal{O}}^{1}(b)$ can be computed from the knowledge of the $p$-blocks $\mathcal{O}Gb$ and the Brauer correspondent of $b$ in $\mathcal{O}\overline{N}_{G}(C)$. 
\newline\indent Suppose that there are $e$ simple $kG$-modules in the block $b$ of $kG$, then there are $e+1$ simple $KG$-modules in this block, one of this simple module may be exceptional. The number of simple $k\overline{N}_{G}(C)$-modules $W$ in a block $b'$ which is the Brauer correspondent of $\overline{b}$ is $e$. Since $\overline{N}_{G}(C)$ is a $p'$-group, each simple $k\overline{N}_{G}(C)$-module gives rise to a unique simple $K\overline{N}_{G}(C)$-module. Thus the decomposition matrix of $\mu_{\mathcal{O}}^{1}(b)$ is the following block matrix with $2e+1$ columns and $2e$ rows: 
\[
\left(
\begin{array}{ccc}
\begin{array}{ccc}
&\\
 &D(co\mu_{\mathcal{O}}(b))\\
& \\
\end{array}
\begin{array}{ccc}
& 0_{e\times e}\\
& \\
& Id_{e\times e}
\end{array} 
\end{array}
\right)
\]
Where $D(co\mu_{\mathcal{O}}(b))$ is the decomposition matrix of $co\mu_{\mathcal{O}}(b)$. 
\newline\indent So if two blocks  $kGb$ and $kHc$, with cyclic defect group of order $p$ are splendidly Morita equivalent, the blocks $\mathcal{O}Gb$ and $\mathcal{O}Hc$ are splendidly Morita equivalent by Section 5 of \cite{splendid}. By Proposition $4.4$ of \cite{equiv_coho}, the blocks of the cohomological Mackey algebras $co\mu_{\mathcal{O}}(b)$ and $co\mu_{\mathcal{O}}(c)$ are Morita equivalent, so the Cartan matrices of $\mu_{k}^{1}(b)$ and $\mu_{k}^{1}(c)$ are the same. 
\end{proof}
\begin{coro}
Let $G$ and $H$ be two finite groups. Let $b$ be a block of $\mathcal{O}G$ and let $c$ be a block of $\mathcal{O}H$. Let us suppose that the blocks $b$ and $c$ have a defect group of order $p$. Then
\begin{enumerate}
\item $D^{b}(\mu_{\mathcal{O}}^{1}(b))\cong D^{b}(\mu_{\mathcal{O}}^{1}(c))$ if and only if $D^{b}(\mathcal{O}Gb) \cong D^{b}(\mathcal{O}Hc)$.
\item If $\mathcal{O}Gb$ and $\mathcal{O}Hc$ are splendidly Morita equivalent, then $\mu_{\mathcal{O}}^{1}(b)$-$Mod \cong \mu_{\mathcal{O}}^{1}(c)$-$Mod$. 
\end{enumerate}
\end{coro}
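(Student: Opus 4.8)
The plan is to transcribe the proofs of Theorem~\ref{brauer_tree} and Proposition~\ref{morita_p} over $\mathcal{O}$, replacing \emph{Brauer tree algebra} by \emph{Green order} throughout. Recall (Theorem~$20.10$ of \cite{tw} for defect exactly $p$, and \cite{these} in general) that $\mu_{\mathcal{O}}^{1}(b)$ and $\mu_{\mathcal{O}}^{1}(c)$ are Green orders whose reductions modulo $\mathfrak{p}$ are the Brauer tree algebras $\mu_{k}^{1}(b)$ and $\mu_{k}^{1}(c)$ of~\ref{brauer_tree}; likewise $\mathcal{O}Gb$ and $\mathcal{O}Hc$ are Green orders reducing to $kGb$ and $kHc$. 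For all four, the relevant data is the pair (underlying Brauer tree, exceptional multiplicity), which is read off from the reduction modulo $\mathfrak{p}$, and the tree of $\mu_{\mathcal{O}}^{1}(b)$ is the ``doubled'' tree $T_{\mathrm{Mack}}$ of the proof of~\ref{brauer_tree}: it has $2e$ edges, where $e$ is the number of edges of the tree of $\mathcal{O}Gb$, and the same exceptional vertex and multiplicity $m$; the $e$ new edges are twigs.

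For part~(1), I would first record the Green-order analogue of the facts used over $k$: two Green orders over $\mathcal{O}$ are derived equivalent if and only if they have the same number of edges and the same exceptional multiplicity (equivalently, a tilting complex between the reductions lifts to $\mathcal{O}$, and conversely a derived equivalence over $\mathcal{O}$ reduces modulo $\mathfrak{p}$ --- this is classical for blocks of cyclic defect and is the content of the corresponding part of \cite{these} for the Mackey-algebra Green orders). Granting this, write $(e_{b},m_{b})$ and $(e_{c},m_{c})$ for the invariants of $\mathcal{O}Gb$ and $\mathcal{O}Hc$. Then $D^{b}(\mu_{\mathcal{O}}^{1}(b))\cong D^{b}(\mu_{\mathcal{O}}^{1}(c))$ iff $2e_{b}=2e_{c}$ and $m_{b}=m_{c}$, iff $e_{b}=e_{c}$ and $m_{b}=m_{c}$, iff $D^{b}(\mathcal{O}Gb)\cong D^{b}(\mathcal{O}Hc)$, the last equivalence being the same statement applied to the group-algebra blocks (which have defect group of order $p$, hence cyclic).

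For part~(2), splendid Morita equivalence of $\mathcal{O}Gb$ and $\mathcal{O}Hc$ already gives $D^{b}(\mathcal{O}Gb)\cong D^{b}(\mathcal{O}Hc)$, so by part~(1) the Green orders $\mu_{\mathcal{O}}^{1}(b)$ and $\mu_{\mathcal{O}}^{1}(c)$ are derived equivalent; it remains to upgrade this to a Morita equivalence. By Proposition~$4.4$ of \cite{equiv_coho} the cohomological Mackey algebras $co\mu_{\mathcal{O}}(b)$ and $co\mu_{\mathcal{O}}(c)$ are Morita equivalent, hence have the same decomposition matrix; by the block-triangular description of the decomposition matrix of $\mu_{\mathcal{O}}^{1}(-)$ obtained from Proposition~\ref{decomp} (as displayed in the proof of~\ref{morita_p}, now over $\mathcal{O}$), the decomposition matrices --- and therefore the Cartan matrices --- of $\mu_{\mathcal{O}}^{1}(b)$ and $\mu_{\mathcal{O}}^{1}(c)$ coincide. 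Since two Green orders over $\mathcal{O}$ with isomorphic underlying trees and equal exceptional multiplicity are Morita equivalent, and since (exactly as in~\ref{morita_p}) the Cartan matrix together with the derived equivalence pins down the tree $T_{\mathrm{Mack}}$ among the trees of the allowed shape, we conclude $\mu_{\mathcal{O}}^{1}(b)$-$Mod\cong\mu_{\mathcal{O}}^{1}(c)$-$Mod$.

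The main obstacle is the $\mathcal{O}$-level input: the assertion that derived and Morita equivalence of Green orders over $\mathcal{O}$ are controlled by the same combinatorial invariants as for their reductions modulo $\mathfrak{p}$ --- equivalently, that the relevant tilting complexes and Morita bimodules lift from $k$ to $\mathcal{O}$ with no obstruction. For blocks of groups with cyclic defect this is standard; for the Mackey-algebra Green orders it has to be extracted from the structure theory of \cite{these}. Once it is in place, the proof is a routine transfer of~\ref{brauer_tree} and~\ref{morita_p} from $k$ to $\mathcal{O}$, with the only bookkeeping being the twig-doubling recipe for $T_{\mathrm{Mack}}$.
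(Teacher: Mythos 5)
There is a genuine gap, and it is exactly at the point you flag as the ``main obstacle'' but then wave through. You record, as the ``Green-order analogue'' of the Brauer tree classification, that two Green orders over $\mathcal{O}$ are derived equivalent if and only if they have the same number of edges and the same exceptional multiplicity. That is \emph{false} as a general statement about Green orders: the K\"onig--Zimmermann theorem from \cite{derived_order} that the paper invokes requires, in addition to the combinatorial tree data, that the \emph{local data} attached to the vertices agree --- concretely, the $\mathcal{O}$-orders sitting at the vertices of the tree. Unlike the Brauer tree algebra situation over $k$, the tree and multiplicity alone do not pin a Green order down up to derived (or Morita) equivalence. Your reformulation via ``lifting tilting complexes'' hides the same issue: Rickard's lifting theorem produces a tilting complex over $\mathcal{O}$ whose endomorphism order merely \emph{reduces} to $\mu_{k}^{1}(c)$ modulo $\mathfrak{p}$; it does not force that endomorphism order to be $\mu_{\mathcal{O}}^{1}(c)$, precisely because several non-isomorphic Green orders can share the same reduction.

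The paper's proof resolves this by verifying the missing local input explicitly: for both the block algebras $\mathcal{O}Gb$ and the $p$-local Mackey algebras $\mu_{\mathcal{O}}^{1}(b)$ (over a ``large enough'' $p$-modular system), the order attached to each non-exceptional vertex is $\mathcal{O}$ itself, and the order attached to the exceptional vertex depends only on the exceptional multiplicity; this is Lemme~$6.3.24$ of \cite{these}. Only after this normalization does the comparison reduce to counting edges and matching multiplicities, after which part~(1) follows from K\"onig--Zimmermann and part~(2) from the Morita-equivalence version of their result together with the tree identification already obtained over $k$ in Proposition~\ref{morita_p}. Your argument for part~(2) via $co\mu_{\mathcal{O}}$ and decomposition matrices essentially re-proves Proposition~\ref{morita_p}, which is harmless but unnecessary; the substantive omission in both parts is the control of the local orders at the vertices, which is where the actual content of the corollary lies.
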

\begin{proof}
For this proof, we suppose that the reader is familiar with the notion of \emph{Green orders}. For the definition and properties of Green orders see \cite{derived_zimmermann} and \cite{derived_order}. 
\newline The Mackey algebra $\mu_{k}^{1}(b)$ over $k$ of a block with defect of order $p$ is a Brauer tree algebra, so there is a Green walk on this tree. One can lifts this Green walk for  $\mu_{\mathcal{O}}^{1}(b)$ exactly as Green did in \cite{green_walk}. This shows that the Mackey algebra over $\mathcal{O}$ is a Green order in the sense of \cite{green_order}. K\"onig and Zimmermann in \cite{derived_order} proved that two \emph{generic} Green orders with trees having same number of vertices and same exceptional vertex plus some local properties (the orders attached to the vertices have to coincide) are derived equivalent. In our situation, that is: group algebras or $p$-local Mackey algebras over a $p$-modular system which is `large enough', one can prove that, the order attached to a non exceptional vertex is $\mathcal{O}$ and the order attached to the exceptional vertex depends only on the exceptional multiplicity (for more details see Lemme $6.3.24$ of \cite{these}). The first part of the theorem follows. The second part follows from the fact that two generic Green orders are Morita equivalent if and only if they have isomorphic trees, with same exceptional multiplicity and the local datas of the two trees coincide. Since the exceptional datas of the trees of $\mu_{\mathcal{O}}^{1}(b)$ and $\mu_{\mathcal{O}}^{1}(c)$ coincide, the result follows from Proposition \ref{morita_p}
\end{proof}
\paragraph{Acknowledgements
}This work is part of my PhD at Universit\'e de Picardie Jules Verne. I would like to thank Serge Bouc, my PhD advisor for many helpful conversations. I would like to thank Alexander Zimmermann for his help with the notion of Green orders. I thank the foundation FEDER and the CNRS for their financial support. I finally thank ECOS and CONACYT for the financial support in the project M10M01.
\par\noindent
{Baptiste Rognerud\\
EPFL / SB / MATHGEOM / CTG\\
Station 8\\
CH-1015 Lausanne\\
Switzerland\\
e-mail: baptiste.rognerud@epfl.ch}
\end{document}